\newtheorem{thm}{Theorem}[section]
\newtheorem{prop}{Proposition}[section]
\newtheorem{lem}{Lemma}[section]
\newtheorem{cor}{Corollary}[section]
\title[On periodic solutions in the Whitney's problem]{On 
periodic solutions in the Whitney's inverted pendulum problem}
\author{Roman Srzednicki}
\address{Institute of Mathematics, Faculty of Mathematics and Computer Science,
Jagiellonian University, ul. {\L}ojasiewicza 6, 30--348~Krak\'ow, Poland}
\email{srzednicki@im.uj.edu.pl}
\thanks{This research is partially supported by the Polish National Science Center
under Grant No. 2014/14/A/ST1/00453}
\subjclass[2010]{34C25, 37B55, 70G40, 70K40}
\keywords{inverted pendulum, periodic solution,
  Poincar\'e operator, bound set}
\date\today
\begin{document}

\begin{abstract}
In the book ``What is Mathematics?''
Richard Courant and Herbert Robbins presented
a solution of a Whitney's problem of an inverted
pendulum on a railway carriage moving on a straight line.
Since the appearance of the book in 1941 the solution
was contested by several distinguished mathematicians. 
The first formal proof based on
the idea of Courant and Robbins was published by
Ivan Polekhin in 2014. Polekhin also proved a theorem on
the existence of a periodic solution of the problem provided the
movement of the carriage on the line is periodic. 
In the present paper we slightly improve the Polekhin's
theorem by lowering the regularity class of the motion and
we prove 
a theorem on the existence of a periodic solution if 
the carriage moves periodically on the plane.
\end{abstract}

\maketitle

\section{Introduction}
\label{sec:intro}
In the year 1941,
in the first edition of
the book ``What is Mathematics'' Richard Courant and Herbert Robbins
posed the following question suggested by Hassler~Whitney:
``Suppose a train travels from station $A$ to station $B$ along a straight section
of track. The journey need not be of uniform speed or acceleration. The train
may act in any manner, speeding up, slowing down, coming to a halt, or even backing
up for a while, before reaching $B$. But the exact motion of the train is supposed
to be known in advance; that is, the function $s=f(t)$ is given, where $s$ is the distance
of the train from station $A$, and $t$ is the time, measured from the instant of departure.
On the floor of one of the cars a rod is pivoted so that it may move without friction
either forward or backward until it touches the floor. If it does touch the floor, we
assume that it remains on the floor henceforth; this will be the case if the rod does
not bounce. Is it possible to place the rod in such a position that, if it is released at
the instant when the train starts and allowed to move solely under the influence of
gravity and the potion of the train, it will not fall to the floor during the entire journey
form $A$ to $B$?''.  The question is illustrated in
Figure~\ref{fig:cart}. 
\begin{figure}[ht]
\begin{tikzpicture}
\draw[->] (-5,0) -- (5,0) ;
\fill (-4,0) circle (1pt) node [above]{$A$}; 
\fill (4,0) circle (1pt) node [above]{$B$}; 
\draw (-1.6,.4) circle [radius=.4];
\fill (-1.6,.4) circle (2pt);
\draw (1.6,.4) circle [radius=.4];
\fill (1.6,.4) circle (2pt);

\draw (-2.4,.5) -- (-2,.5);
\draw (-1.2,.5) -- (1.2,.5);
\draw (2,.5) -- (2.4,.5);

\draw (-2.4,.5) -- (-2.4,.9);
\draw (-2.4,.9) -- (-.1,.9);

\draw (2.4,.5) -- (2.4,.9);
\draw (2.4,.9) -- (.1,.9);
\draw (0,.9) circle (.1);

\draw [ultra thick] (-.8,2.3) -- (-1pt,1);
\end{tikzpicture}
\caption{ }
\label{fig:cart}
\end{figure}
Assuming continuous dependence of the motion
of the rod on its initial position, Courant and Robbins explained 
how the intermediate value theorem implies the positive answer.
Moreover, as exercises they posed the problems:  ``the reasoning above may be generalized 
to the case when the journey is of infinite duration" and
``generalize to the
case where the motion of the train is along any curve in the plane and the rod may fall
in any direction'' with a hint on nonexistence of a retraction of a disk onto its boundary
(compare \cite[pp.\,319 -- 321]{cr}).
\par
In the sequel we refer to the above question (either in finite or infinite
time setting) as to the 
Whitney's linear inverted pendulum problem (shorter: the problem or the Whitney's problem). 
If the curve $f$ lies in the plane, we call 
the problem ``planar''.
\par
In 1953 John E. Littlewood included the problem in his book ``A Mathematician's Miscellany'' 
(\cite[pp.\,12 -- 14]{lit}) and provided his own explanation
(close to the Courant and Robbins' one, actually). 
The fragment of \cite{cr} related to the problem 
was reproduced in \cite[pp.\,2412, 2413]{n4} 
(under the title ``The Lever of Mahomet'') in the year 1960. In the paper \cite{br} published in
1958 Arne~Broman noticed that
the assumption on continuity needs an explanation. He presented a comprehensive
argument supporting the Courant
and Robbins' solution, although his proof lacks of 
formal rigor at some details. 
\par
In 1976 the continuity assumption was contested 
by Tim Poston in the article  \cite{post} in Manifold, 
a mimeographed magazine issued by the Warwick University. 
Basing on possible (according to him) phase portraits related to the problem, he 
claimed that the rod can come arbitrarily close to the floor of the car and then swing back
causing discontinuity of its final position with respect to the initial one.
The argument of Poston was replicated by Ian Stewart in the book ``Game, Set and Math'' 
(\cite[pp.\,63, 64, 68]{st}, 1989)
and also in the Stewart's comments to the second edition of \cite{cr} which was published in 1996;
see \cite[pp.\,505 -- 507]{crs}. In the review of \cite{crs} 
published in The American Mathematical
Monthly in 1998 (see \cite{gil}) Leonard Gillman opposed to the arguments of Poston and Stewart writing:
``the acceleration of the train would have to be unbounded, which is not possible from physical
locomotive'' and 
presented a descriptive proof of the Courant and Robbins' solution suggested to him 
by Charles Radin. In 2001, in another review of \cite{crs} Leonard Blank also
criticized the Stewart's comment related to the problem by citing the conclusion
of the Gillman's report  (compare \cite{blank}). 
\par
The next comment contesting the continuity assumption
appeared in the Vla\-di\-mir Arnold's
short book ``What is Mathematics?'' published in 2002. Arnold writes (in my translation): 
``no continuous function -- the finite position for a given initial position -- 
can be seen immediately: it should be carefully defined (with the possibility of hitting the platform) 
and its continuity should be proved''. On the other hand, as the London Mathematical Society
Newsletter reported in 2009,
in the inaugural Christopher Zeeman Medal Award lecture 
entitled ``The Strange Case of the Courant-Robbins Train''
Stewart admitted that ``Courant and Robbins were
correct to assume continuity in the particular case where the carriage has a flat floor'' (compare \cite[pp.\,33, 34]{lms}). 
At that time Arnold had still objections towards the correctness of the solution
from \cite{cr}. In the
chapter ``Courant's Erroneous Theorems'' of the book \cite{a2}, after presentation of the problem
and the solution (for the travel time from $0$ to $T$ and the angle $\alpha\in [0,\pi]$ 
between the floor and the rod 
as a function of the initial position and time) he wrote:  ``many people disputed this (incorrect) proof, because 
even if a continuous function $\alpha(\cdot,T)$ of the initial position $\phi$ were defined, its
difference from $0$ to $\pi$ under the initial condition $\cdot=\phi$ would not imply that the
angle $\alpha$ differs from $0$ and $\pi$ at all intermediate moments of time $0<t<T$''.
\par
Finally, in 2014,
73 years from the announcement of the Whitney's problem,  in the paper
\cite{p1} Ivan Polekhin provided a short rigorous proof of the Courant and Robbins's solution
based on the Wa\.zewski retract theorem. Other proofs were
published in \cite{bk} and \cite{z}.
A natural question on the existence of a non-falling $T$-periodic solution when the path $f$
of the car is $T$-periodic was also considered by Polekhin. In \cite{p1}
he proved that if $f$ is of $C^3$-class then such a periodic solution
exist in the linear problem. Moreover, in \cite{p2} he got the same
conclusion in the planar problem, provided the rod
moves with friction.
\par
The main purpose of 
 the present paper is to prove two theorems on periodic solutions
 in the Whitney's problem. By lowering the regularity class of $f$, 
 the first one provides a minor improvement to the 
 corresponding result in \cite{p1}.
 
 \begin{thm}
 \label{thm:main_linear}
 If $f\colon \mathbb R\to \mathbb R$ is 
 $T$-periodic and of $C^2$-class then the linear
 Whitney's inverted pendulum problem has a $T$-periodic solution.
 \end{thm}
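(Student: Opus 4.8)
The plan is to replace the geometric problem by the $T$–periodic problem for a scalar second–order equation and to solve the latter by a continuation argument built around a bound set. Let $\alpha\in[0,\pi]$ denote the angle between the rod and the forward direction of the floor, so that the rod touches the floor exactly when $\alpha\in\{0,\pi\}$ and stands vertically when $\alpha=\pi/2$. Writing $f(t)$ for the position of the carriage and modelling the rod as a homogeneous bar of length $\ell$ hinged at the moving floor point, a routine Lagrangian computation yields
\[
\ddot\alpha = p(t)\sin\alpha-\omega^{2}\cos\alpha,\qquad \omega^{2}=\frac{3g}{2\ell},\qquad p(t)=\frac{3}{2\ell}\,\ddot f(t)
\]
(the precise constants are immaterial below). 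Since $f\in C^{2}$ is $T$–periodic, $p$ is continuous and $T$–periodic, the right–hand side is $C^{\infty}$ in $\alpha$ and continuous in $t$, so the equation generates a $T$–periodic process with uniqueness and continuous dependence, all solutions being of class $C^{2}$; and ``the rod never falls on $\mathbb R$'' means precisely $\alpha(t)\in(0,\pi)$ for all $t$. Two features will do the work: the right–hand side equals $-\omega^{2}<0$ at $\alpha=0$ and $+\omega^{2}>0$ at $\alpha=\pi$ for every $t$; and $\int_{0}^{T}p=\frac{3}{2\ell}\bigl(\dot f(T)-\dot f(0)\bigr)=0$, so $0$ is the mean value of the forcing.

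I would then show that $\Omega=(0,\pi)$ is a bound set, simultaneously for every member of the homotopy $\ddot\alpha=s\,p(t)\sin\alpha-\omega^{2}\cos\alpha$, $s\in[0,1]$, with auxiliary function $V(\alpha)=(\alpha-\tfrac{\pi}{2})^{2}-(\tfrac{\pi}{2})^{2}$. Indeed $V<0$ in $\Omega$, while $V=0$ and $V'\ne0$ on $\partial\Omega=\{0,\pi\}$; and at $\alpha_{0}\in\{0,\pi\}$ the relation $V'(\alpha_{0})\dot\alpha=0$ forces $\dot\alpha=0$, whereupon
\[
V''(\alpha_{0})\,\dot\alpha^{2}+V'(\alpha_{0})\bigl(s\,p(t)\sin\alpha_{0}-\omega^{2}\cos\alpha_{0}\bigr)=(\mp\pi)(\mp\omega^{2})=\pi\omega^{2}>0
\]
for all $s$ and $t$. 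Hence no $T$–periodic solution of any member of the family can stay in $[0,\pi]$ while touching $\{0,\pi\}$: at a contact instant the $C^{2}$ function $t\mapsto V(\alpha(t))$ would attain an interior maximum with vanishing first and strictly positive second derivative. Therefore every $[0,\pi]$–valued $T$–periodic solution of the family in fact stays in $(0,\pi)$; and since $|\ddot\alpha|\le\omega^{2}+\|p\|_{\infty}$ and $\dot\alpha$ vanishes where $\alpha$ is maximal, such a solution obeys $|\dot\alpha|\le(\omega^{2}+\|p\|_{\infty})T=:N$. Both bounds are uniform in $s$.

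With these a priori estimates I would apply Mawhin's continuation theorem to the $T$–periodic problem for the homotopy on the set $\Omega\times(-N-1,N+1)$: along the deformation no $T$–periodic solution reaches the boundary of this set, so the corresponding coincidence degree — equivalently, the fixed point index of the Poincar\'e operator on the invariant part — is independent of $s$. At $s=0$ the forcing has been replaced by its mean value $0$ and the equation is autonomous, $\ddot\alpha=-\omega^{2}\cos\alpha$; this repulsive pendulum has a single saddle at $\alpha=\pi/2$ and no closed orbit in $0<\alpha<\pi$, so $\alpha\equiv\pi/2$ is its only $[0,\pi]$–valued $T$–periodic solution, and it is nondegenerate because $\ddot\xi=\omega^{2}\xi$ admits no nonzero $T$–periodic solution; its contribution to the degree is $\pm1\ne0$. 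Consequently the degree at $s=1$ is nonzero, the original equation has a $T$–periodic solution $\alpha(\cdot)$ with $\alpha(t)\in(0,\pi)$ for every $t$, and this is the non–falling $T$–periodic motion of the rod asserted in Theorem~\ref{thm:main_linear}.

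I expect the difficulty to be organizational rather than computational. The one inequality that looks delicate is the bound–set condition, where one might fear the force has to be differentiated — presumably the reason $C^{3}$ is assumed in \cite{p1} — but it is invoked only at $\alpha\in\{0,\pi\}$, where the force collapses to the constant $\mp\omega^{2}$: no derivative of $p$, hence no third derivative of $f$, occurs, and continuity of $\ddot f$ suffices. The genuine care is needed in (a) verifying that the a priori bounds and the absence of boundary solutions hold uniformly in $s$, so that the degree is homotopy invariant, and (b) describing the autonomous limit precisely, i.e.\ that $\alpha\equiv\pi/2$ is the unique $[0,\pi]$–valued $T$–periodic solution of $\ddot\alpha=-\omega^{2}\cos\alpha$ and is hyperbolic. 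A variant closer to the Wa\.zewski method of \cite{p1} would instead use a compact block in the $(\alpha,\dot\alpha)$–plane bounded near $\alpha=0,\pi$ by the segments $\alpha=0$, $\alpha=\pi$ and elsewhere by arcs of the parabolas $\dot\alpha^{2}=c\pm2M\alpha$ (with $M>\omega^{2}+\|p\|_{\infty}$ and $c$ large), across which the flow is transversal inward by virtue of $|\ddot f|\le\mathrm{const}$ alone; its exit set is a union of two arcs, the fixed point index of the Poincar\'e operator on the invariant part equals $\chi(\mathrm{block})-\chi(\mathrm{exit\ set})=1-2=-1\ne0$, and the same conclusion follows.
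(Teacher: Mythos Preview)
Your argument is correct, and follows the same continuation-plus-bound-set strategy as the paper, but with a different implementation. The paper works in the Cartesian coordinate $x$ (horizontal displacement of the rod tip from the pivot), deriving
\[
\ddot x=\Bigl(G\sqrt{1-x^2}-\frac{\dot x^2}{1-x^2}\Bigr)x-(1-x^2)F(t),
\]
and then constructs an explicit compact bound set in phase space, namely $\Gamma_a\cap\Delta_b=\{|x|\le a\}\cap\{b|x|+|\dot x|\le b\}$, tuning $a$ near~$1$ and $b$ large so that every boundary point is an exit/entrance/bounce point for the whole homotopy $\lambda\in[0,1]$; the degree of the autonomous limit at its unique zero $(0,0)$ is $-1$. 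You instead use the angular coordinate, where the bound set in configuration space is simply the physical interval $[0,\pi]$ and the curvature condition at $\alpha\in\{0,\pi\}$ reduces to the constant $\pm\omega^{2}$ with no free parameters to tune; the velocity bound then comes a posteriori from the crude estimate $|\ddot\alpha|\le\omega^{2}+\|p\|_{\infty}$. Your route is slightly more transparent in the one-dimensional problem and makes it especially clear why $f\in C^{2}$ suffices (the force at the boundary collapses to a constant and no derivative of $p$ is needed). The paper's Cartesian setup, on the other hand, is chosen because it is what carries over to the planar problem in Section~4, which is the main goal of the paper; there the analogue of your $[0,\pi]$ becomes the closed disc, and the boundary analysis is no longer so cheap.
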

The second theorem is the main contribution of this research.
In contrast to \cite{p2}, it refers 
 to the original planar problem with the frictionless movement of the rod.
 \begin{thm}
 \label{thm:main_planar}
 If $f\colon \mathbb R\to \mathbb R^2$ is $T$-periodic 
 and of $C^3$-class then the planar Whitney's inverted pendulum
 problem has a $T$-periodic solution.
 \end{thm}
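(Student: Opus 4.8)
We outline a possible line of attack.

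\emph{Step 1: the equation of motion.} The plan is to describe the rod by the unit vector $q\in S^{2}$ pointing from the pivot (at $(f_{1}(t),f_{2}(t),0)$) to the free end, so that the rod has not reached the floor exactly when $q_{3}>0$. Writing the Lagrangian of the homogeneous rod in the carriage frame and eliminating the constraint $|q|=1$ leads to an equation of the form
\begin{equation*}
\ddot q=-|\dot q|^{2}q+F(t,q),\qquad F(t,q)=-\tfrac{3}{2\ell}\bigl((ge_{3}+\ddot f(t))-\langle q,ge_{3}+\ddot f(t)\rangle q\bigr),
\end{equation*}
i.e. $F(t,q)$ is the component tangent to $S^{2}$ at $q$ of the spatially constant apparent gravity $-\tfrac{3}{2\ell}(ge_{3}+\ddot f(t))$ of the carriage ($\ell$ the rod's length, $g$ the gravitational acceleration, $e_{3}$ the upward vertical). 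This is a $T$-periodic Newton equation on $TS^{2}$. Since $f\in C^{3}$, its right-hand side is of class $C^{1}$ jointly in $(t,q,\dot q)$, so the local flow and the Poincar\'e (time-$T$) map are $C^{1}$; and since $\frac{d}{dt}|\dot q|^{2}=2\langle\dot q,F(t,q)\rangle$ with $F$ bounded, $|\dot q|$ grows at most linearly, so the flow is globally defined. (For Theorem~\ref{thm:main_linear} mere continuity of the flow is enough, which is what permits the weaker hypothesis there.)

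\emph{Step 2: the bound set.} The key observation is that on the equator $\{q_{3}=0\}$ one has, for every $t$ and every such $q$,
\begin{equation*}
\ddot q_{3}=\langle F(t,q),e_{3}\rangle=-\tfrac{3g}{2\ell}<0,
\end{equation*}
because $-|\dot q|^{2}q$ contributes nothing when $q_{3}=0$ and the horizontal forcing $\ddot f$ has no vertical component; analytically this is the statement that a horizontal rod always begins to fall, whatever the carriage does. Consequently \emph{any} solution lying in $\{q_{3}\ge 0\}$ for all time in fact lies in $\{q_{3}>0\}$ for all time: otherwise a point $t_{0}$ with $q_{3}(t_{0})=0$ would be an interior minimum of $q_{3}$, forcing $\ddot q_{3}(t_{0})\ge 0$. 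Thus $\{q_{3}>0\}$ is a bound set with bounding function $-q_{3}$, the curvature inequality holding strictly, and it suffices to find a $T$-periodic solution contained in $\{q_{3}\ge 0\}$.

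\emph{Step 3: a priori bound, truncation, and the topological conclusion.} Since $F$ is bounded and independent of $\dot q$, the equation obeys a Nagumo growth condition; as $q$ necessarily stays on the compact manifold $S^{2}$, Nagumo's estimate yields a constant $R>0$, depending only on $g,\ell,\|\ddot f\|_{\infty},T$, bounding $|\dot q|$ along every $T$-periodic solution. Fix $\rho>R$ and modify the equation on $\{|\dot q|\ge\rho\}$ by a dissipative term, so that the modified equation still satisfies a Nagumo condition with the same bound $R$ (hence its $T$-periodic solutions solve the original equation) and the sphere $\{|\dot q|=2\rho\}$ becomes strictly inflowing; the modification does not affect $\ddot q_{3}$ on the equator, its vertical component $-h(|\dot q|)\dot q_{3}$ vanishing there where $\dot q_{3}=0$. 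Put $W=\{(q,\dot q)\in TS^{2}:q_{3}\ge 0,\ |\dot q|\le 2\rho\}$. By Step~2 and the above, $W$ (with a corner smoothed) is an isolating block for the extended flow on $(\mathbb{R}/T\mathbb{Z})\times TS^{2}$, independent of $t$, with exit set $W^{-}=\{q_{3}=0,\ \dot q_{3}\le 0,\ |\dot q|\le 2\rho\}$. Now $W$ is a disk bundle over the closed upper hemisphere, hence contractible, while $W^{-}$ is a half-disk bundle over the equator circle, hence $\simeq S^{1}$, and the inclusion $W^{-}\hookrightarrow W$ kills $H_{1}$; so $W/W^{-}$ has the homology of $S^{2}$. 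Because $W$ does not depend on $t$, the monodromy of the block is homotopic to the identity of $W/W^{-}$, whose Lefschetz number is $\chi(S^{2})=2\ne 0$. By the theorem on $T$-periodic solutions carried by periodic isolating blocks (the Wa\.zewski--Conley method, in the spirit of \cite{p1}), the modified, hence also the original, equation has a $T$-periodic solution in $W$; it satisfies $q_{3}(t)\ge 0$, hence by Step~2 $q_{3}(t)>0$ for all $t$, which is the required non-falling $T$-periodic motion.

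\emph{Where the difficulty lies.} The sign $\ddot q_{3}<0$ on the equator is a one-line computation; the real work is in legitimising Step~3, where $W$ is noncompact in the velocity direction. One must either carry out the dissipative truncation carefully --- checking that it introduces no new periodic solutions and leaves the exit set untouched --- or appeal to a version of the isolating-block theorem for noncompact blocks with compact invariant part. It is worth emphasising that the same construction for the linear problem yields $W/W^{-}\simeq S^{1}$, with $\chi=0$, so the argument collapses there: it is precisely the extra dimension of the planar problem that makes it work, which is why Theorem~\ref{thm:main_linear} must be proved by other means.
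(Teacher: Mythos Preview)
Your approach differs genuinely from the paper's. The paper works in the Cartesian chart $x=(q_1,q_2)$ of the upper hemisphere, derives the second-order equation \eqref{eq:main}, and applies the continuation theorem~\ref{thm:fl} (essentially \cite[Corollary~3]{cmz}): a homotopy $\lambda\in[0,1]$ connects the autonomous problem to the forced one, the compact set $\Gamma_a\cap\Delta_b=\{|x|\le a\}\cap\{b|x|+|p|\le b\}$ is shown to be a bound set for every $\lambda$ (Lemmas~\ref{lem:ga} and~\ref{lem:2}), and the Brouwer degree of the autonomous field at its unique zero is computed to be $1$. The technical core is Lemma~\ref{lem:2}, the curvature-bound verification on the diagonal face $b|x|+|p|=b$; this is where $\|\dot F\|$ enters and is the reason for the $C^3$ hypothesis on $f$.

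Your isolating-block route on $TS^2$ with the Lefschetz number of $W/W^{-}\simeq S^2$ is a legitimate alternative, and your index-pair computation is correct. The real gap is the sentence ``Nagumo's estimate yields a constant $R$ bounding $|\dot q|$ along every $T$-periodic solution'': this is not a standard Nagumo situation --- great circles on $S^2$ are periodic with constant arbitrary speed, so a naive appeal fails --- and without such a bound you cannot conclude that the periodic solution produced for the truncated equation has $|\dot q|<\rho$ and hence solves the original one. The bound \emph{can} be established: at the minimum $t_*$ of $q_3$ along a periodic solution in the open hemisphere one has $\dot q_3=0$, $\ddot q_3\ge 0$, and dividing $\ddot q_3=-|\dot q|^2q_3+F_3\ge 0$ by $q_3(t_*)>0$ gives $|\dot q(t_*)|^2\le c\|\ddot f\|$; then $\bigl|\tfrac{d}{dt}|\dot q|\bigr|\le\|F\|$ propagates this over a period, and since the dissipative term $-h(|\dot q|)\dot q_3$ vanishes at $t_*$, the same argument covers the modified equation. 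But this is the heart of the matter and you pass over it.

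Your closing remark is correct and worth keeping: in the linear problem the same construction gives $W/W^{-}\simeq S^1$ with $\chi=0$, so the Lefschetz argument is vacuous there. The paper's continuation method is indifferent to this, the Brouwer degree being $-1$ in the linear case and $+1$ in the planar one; that uniformity is what continuation buys. Conversely, once the a~priori bound above is supplied, your argument appears not to require $\|\dot F\|$ at all, which via approximation would lower the hypothesis to $f\in C^2$ --- stronger than what the paper obtains.
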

Assuming the mass of the rod is concentrated at its top (i.e. it is a mathematical
pendulum), both theorems are translated into 
theorems on periodic solutions of nonautonomous differential equations. In
the proofs we apply a result from the paper \cite{cmz} on the existence of periodic solutions
by a continuation method. 
Problems related to periodic perturbations of 
inverted pendulum are
considered also in control theory. In particular, in  \cite{ct1,ct2,ct3}
modifications of results of \cite{cmz}
were 
applied in the proofs of theorems on the exact tracking problem.
\par 
The rest of the paper is organized as follows. 
In Section~\ref{sec:some} we recall some standard notions corresponding to ordinary
differential equations: dynamical system, evolution operator, Poincar\'e operator, etc.
and also the notions of exit, entrance, and bound sets. The latter notion 
first appeared in a restricted context in \cite[p.\,42]{gm} and in full generality in \cite{zan}. 
Motivated by \cite[p.\,44]{gm} we introduce the notion of a curvature bound function which
is used to construct bound sets. The main result of this section is
Theorem~\ref{thm:fl} (which essentially is the same as \cite[Corollary~3]{cmz}),  
a sufficient condition for the existence of periodic solutions of a nonautonomous equation  
in terms of bound sets and the topological degree of a vectorfield homotopic to the right-hand
side of the equation. 
We provide a direct proof 
of Theorem~\ref{thm:fl}
based on the continuation 
of the fixed point index.
In Section~\ref{sec:plin} we 
derive a second-order equation related to the linear problem  by an elementary 
application of the Newton's second law in
the Cartesian coordinates, then we prove two lemmas related to the existence
of bound sets for suitable modifications of the derived equation, and finally
we apply Theorem~\ref{thm:fl} in a proof of Theorem~\ref{thm:lin}, a reformulation
of Theorem~\ref{thm:main_linear} in the Cartesian coordinates system. 
In exactly the same way we proceed in Section~\ref{sec:ps} on the planar problem; the main
result here is Theorem~\ref{thm:main} which reformulates Theorem~\ref{thm:main_planar}
in the Cartesian coordinates.
It should be noted, however, that in spite of similarities of the results
in Sections~\ref{sec:plin} and \ref{sec:ps}, some proofs in Section~\ref{sec:ps} are different
and essentially
more complex due to higher dimension of the phase space in the planar problem with
respect to the dimension of the  phase space in the linear one.
\par 
We use a standard vector notation in $\mathbb R^n$. In particular,
vectors are represented by columns, $A^T$ denotes the transpose
of $A$, $\operatorname{diag}(A_1,\ldots,A_k)$ denotes the block
diagonal matrix 
of square matrices $A_1,\ldots,A_k$, and
$\begin{bmatrix} x_1 &\ldots & x_k\end{bmatrix}$ denotes the matrix with columns
$x_1,\ldots,x_k$. The scalar product of vectors $x$ and $y$  is defined
as $x^Ty$ and 
the norm of a vector $x$ is given by $|x|:=\sqrt{x^Tx}$.
The derivative of a function $f$ is denoted by $D f$; if $f$ is single-variable it
is also denoted by $\dot f$. The Hessian of a scalar function $f$ is denoted by $D^2f$.
The norm of a continuous $T$-periodic function $f\colon \mathbb R\to 
\mathbb R^n$ is defined as $\|f\|:=\max_{t\in [0,T]}|f(t)|$. 
\par
The author wishes to thank an anonymous referee for pointing out an essential
error in the first version of the paper.

\section{A theorem on the existence of periodic solutions}
\label{sec:some}
Let $\Omega$ be an open set
in $\mathbb R^n$ and let $v\colon \Omega\to \mathbb R^n$ be a 
vectorfield of $C^1$-class. Denote by $\phi$ the dynamical
system generated by $v$; recall that 
$t\to \phi_t(x_0)$ is the maximal solution of the equation
\begin{equation}
\label{eq:v}
\dot x=v(x)
\end{equation}
with the initial value $x(0)=x_0$, 
$\phi_0(x)=x$, and $\phi_{s+t}(x)=\phi_s(\phi_t(x))$.
Let $E\subset \Omega$. The
\emph{entrance} and \emph{exit} sets of $E$ are given, respectively, a
\begin{align*}
&
E^+:=\{x\in E\colon \phi_{-\epsilon_n}(x)\notin E\
\text{for some}\ \{\epsilon_n\},\ 0<\epsilon_n\to 0\ 
\text{as $n\to\infty$}\},
\\
&E^-:=\{x\in E\colon \phi_{\epsilon_n}(x)\notin E\
\text{for some}\ \{\epsilon_n\},\ 0<\epsilon_n\to 0\ 
\text{as $n\to\infty$}\}.
\end{align*}
Clearly, $E^\pm$ are subsets $\partial E$, the boundary of  $E$.
A closed set $E$ in $\Omega$ 
is called a \emph{bound set} for 
the vectorfield $v$ if
for every $\epsilon>0$
there is no
$x\in\partial E$ such that $\phi_t(x)\in E$ for each 
$t\in (-\epsilon,\epsilon)$.
\begin{prop}
\label{prop:bounce}
If $E$ is closed and 
$\partial E=E^+\cup E^-$ then
$E$ is a bound set. 
\hfill\qed
\end{prop}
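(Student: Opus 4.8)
The plan is to argue by contradiction, directly unwinding the definitions, without invoking any machinery. Suppose $E$ is not a bound set. Then, by the definition of a bound set, there exist some $\epsilon>0$ and a point $x\in\partial E$ such that $\phi_t(x)\in E$ for every $t\in(-\epsilon,\epsilon)$. I will derive a contradiction with the hypothesis $\partial E=E^+\cup E^-$.

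Since $x\in\partial E=E^+\cup E^-$, at least one of the alternatives $x\in E^+$, $x\in E^-$ must hold. Suppose first that $x\in E^-$. By the definition of the exit set there is a sequence $\epsilon_n$ with $0<\epsilon_n\to 0$ and $\phi_{\epsilon_n}(x)\notin E$. Choosing $n$ so large that $\epsilon_n<\epsilon$ gives $\epsilon_n\in(-\epsilon,\epsilon)$, hence $\phi_{\epsilon_n}(x)\in E$ by the standing assumption --- a contradiction. The case $x\in E^+$ is symmetric: the defining sequence now satisfies $\phi_{-\epsilon_n}(x)\notin E$ with $-\epsilon_n\in(-\epsilon,\epsilon)$ for $n$ large, again contradicting $\phi_t(x)\in E$ on $(-\epsilon,\epsilon)$. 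In either case we reach a contradiction, so $E$ is a bound set.

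The only point requiring a word of care is that the quantities $\phi_{\pm\epsilon_n}(x)$ occurring in the definitions of $E^\pm$ are actually defined: since $x\in\Omega$ and $\phi$ is the local dynamical system generated by the $C^1$ vectorfield $v$, the maximal solution through $x$ is defined on an open interval about $0$, so $\phi_{\pm\epsilon_n}(x)$ makes sense for all sufficiently small $\epsilon_n>0$, which is all that is used. I do not expect any genuine obstacle here; the proposition is essentially a tautological consequence of the definition of a bound set together with the observation that any sequence $\epsilon_n\to 0^+$ eventually falls inside the fixed interval $(-\epsilon,\epsilon)$.
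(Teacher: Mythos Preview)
Your argument is correct and is precisely the immediate unwinding of the definitions that the paper intends: the paper itself gives no proof at all (the proposition is stated with a trailing \qed), since the implication is tautological once one negates the definition of a bound set and compares with the defining sequences for $E^\pm$. There is nothing to add.
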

Let $U$ be an open subset of $\Omega$ and let
$e\colon U\to \mathbb R$ be of $C^2$-class. We call $e$ 
a \emph{curvature bound function} for $v$ if for
each $x\in U$ such that $e(x)=0$,
\begin{equation}
\label{eq:ext}
(De)v=0\ \Longrightarrow  
v^T(D^2e)v+(De)(Dv)v>0.
\end{equation}
Here (and also in the sequel) we use an abbreviate notation: we write
$v$ instead of $v(x)$, $e$ instead of $e(x)$, etc. whenever 
the choice of $x$ is clear
from the context.
\begin{prop}
\label{prop:etf}
If $e\colon U\to \mathbb R$ is a curvature bound function
then for each $x\in U$, $e(x)=0$ there exists an $\epsilon >0$
such that 
\begin{itemize}
\item[] either $e(\phi_t(x))>0$ for $t\in (-\epsilon,0)$, 
$e(\phi_t(x))<0$ for $t\in (0,\epsilon)$,
\item[] or $e(\phi_t(x))<0$ for $t\in (-\epsilon,0)$, 
$e(\phi_t(x))>0$ for $t\in (0,\epsilon)$,
\item[] or else $e(\phi_t(x))>0$ for $t\in (-\epsilon,0)\cup (0,\epsilon)$.
\end{itemize}
\end{prop}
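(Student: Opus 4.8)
The plan is to analyze the behavior of the scalar function $g(t):=e(\phi_t(x))$ near $t=0$ under the hypothesis $e(x)=0$, splitting into two cases according to whether $(De)v$ vanishes at $x$ or not. First I would compute the first two derivatives of $g$: by the chain rule $\dot g(0)=(De)v$ (evaluated at $x$), and differentiating again, $\ddot g(0)=v^T(D^2e)v+(De)(Dv)v$, using $\dot x=v(x)$ along the trajectory. These are exactly the quantities appearing in \eqref{eq:ext}.

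If $(De)v\neq 0$ at $x$, then $\dot g(0)\neq 0$, so $g$ is strictly monotone in a neighborhood of $0$; since $g(0)=0$, either $g<0$ on $(-\epsilon,0)$ and $g>0$ on $(0,\epsilon)$, or the reverse, which gives the first two alternatives in the statement. If instead $(De)v=0$ at $x$, then $\dot g(0)=0$ while the curvature bound condition \eqref{eq:ext} forces $\ddot g(0)>0$; hence $t=0$ is a strict local minimum of $g$ with $g(0)=0$, so $g(t)>0$ for $0<|t|<\epsilon$ with $\epsilon$ small, which is precisely the third alternative. This covers all cases, so the proposition follows.

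The main technical point to be careful about is the regularity needed to justify the Taylor expansion of $g$ to second order: $e$ is $C^2$ and $v$ is $C^1$, so $\phi_t(x)$ is $C^1$ in $t$ (indeed $C^2$ since $\frac{d}{dt}\phi_t(x)=v(\phi_t(x))$ and $v$ is $C^1$), and $g=e\circ\phi_\cdot(x)$ is therefore $C^2$ in $t$ on a neighborhood of $0$; this makes both $\dot g(0)$ and $\ddot g(0)$ well-defined and the second-order Taylor formula with remainder valid. I would also note that the three alternatives are genuinely exhaustive but not mutually exclusive only in the degenerate sense already recorded; no further bookkeeping is required. The argument is short, and the only real content is the correct identification of $\ddot g(0)$ with the left-hand side of the implication in the definition of a curvature bound function.
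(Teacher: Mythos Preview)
Your argument is correct and is essentially the same as the paper's: case-split on whether $(De)v$ vanishes at $x$, obtaining strict monotonicity of $t\mapsto e(\phi_t(x))$ in the nonvanishing case and a strict local minimum at $t=0$ via \eqref{eq:ext} in the vanishing case. You are simply more explicit about the second-derivative computation and the regularity justification than the paper, which merely states the conclusions.
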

\begin{proof}
The first two possibilities come from the inequalities $(De(x))v(x)<0$
and $(De(x))v(x)>0$. If $(De(x))v(x)=0$ then it follows by \eqref{eq:ext}
the function $t\to e(\phi_t(x))$ has a strict local minimum at $0$
(compare also \cite{handbook}, pp.\,617, 618).
\end{proof}
Let 
\[
E=\bigcap_{k=1,\ldots, r} \{e_k\leq 0\}
\]
for some continuous functions $e_k\colon \Omega\to \mathbb R$, hence $E$ is closed in $\Omega$.
As an immediate consequence of Propositions~\ref{prop:bounce} and \ref{prop:etf} we
get the following result.

\begin{cor}
\label{cor:waz}
If for some (possibly empty) 
closed subsets $Z_k\subset \partial E$, $k=1,\ldots,r$,
\begin{itemize}
\item[(a)] $e_k$ is a curvature bound function for $v$
in an open neighborhood
of the set 
\[
\{e_k=0\}\cap \partial E\setminus Z_k
\]
\item[(b)] $Z_k\subset E^-\cup E^+$,
\end{itemize}
then $E$ is a bound set for $v$.\hfill\qed
\end{cor}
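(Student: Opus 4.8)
The plan is to verify the sufficient condition in Proposition~\ref{prop:bounce}: I would show that $\partial E = E^{+}\cup E^{-}$. One inclusion is automatic, since $E^{\pm}\subset\partial E$ always, so the task reduces to proving $\partial E\subset E^{+}\cup E^{-}$. Accordingly I would fix an arbitrary $x\in\partial E$ and exhibit a null sequence of times, of one sign or the other, along which the orbit of $x$ leaves $E$.

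The first step is to single out the constraints that are active at $x$. Since $E=\bigcap_{k}\{e_{k}\le 0\}$ with each $e_{k}$ continuous, a point at which $e_{k}(x)<0$ for every $k$ has an entire neighbourhood contained in $E$, hence lies in the interior of $E$. Therefore, for $x\in\partial E$ the set $I(x):=\{k:e_{k}(x)=0\}$ is nonempty; I would also note that for $k\notin I(x)$ the function $t\mapsto e_{k}(\phi_{t}(x))$ stays negative for $|t|$ small, so those constraints play no role near $t=0$.

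The second step is a dichotomy. If $x\in Z_{k}$ for some $k$, then hypothesis~(b) gives at once $x\in E^{-}\cup E^{+}$. Otherwise $x\notin Z_{k}$ for every $k$; I would then choose any $k_{0}\in I(x)$, so that $x\in\bigl(\{e_{k_{0}}=0\}\cap\partial E\bigr)\setminus Z_{k_{0}}$. By hypothesis~(a), $e_{k_{0}}$ is a curvature bound function for $v$ on an open set $U\ni x$, and $e_{k_{0}}(x)=0$, so Proposition~\ref{prop:etf} applies at $x$ and produces $\epsilon>0$ realizing one of its three alternatives. In each alternative $e_{k_{0}}(\phi_{t}(x))>0$ on a one-sided or two-sided punctured neighbourhood of $0$; since $e_{k_{0}}(\phi_{t}(x))>0$ forces $\phi_{t}(x)\notin E$, choosing $\epsilon_{n}\downarrow 0$ with the appropriate sign exhibits $x$ in $E^{+}$, in $E^{-}$, or in $E^{+}\cap E^{-}$. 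In all cases $x\in E^{+}\cup E^{-}$, which completes the inclusion and hence, by Proposition~\ref{prop:bounce}, the proof.

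I do not expect a genuine obstacle here: the statement is an immediate corollary once the definitions of $E^{\pm}$, of a bound set, and of a curvature bound function are unfolded. The only point demanding care is the sign bookkeeping in the last step --- matching ``$e(\phi_{t}(x))>0$ for $t\in(-\epsilon,0)$'' to membership in $E^{+}$ (through $\phi_{-\epsilon_{n}}(x)\notin E$) and the opposite alternative to $E^{-}$ --- together with the observation that a nonempty family of active indices is always available at a boundary point, so that Proposition~\ref{prop:etf} can legitimately be invoked.
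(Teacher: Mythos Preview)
Your proposal is correct and follows exactly the route indicated by the paper, which states the corollary as an immediate consequence of Propositions~\ref{prop:bounce} and~\ref{prop:etf} without giving further details. Your argument is precisely the natural unfolding of that indication: verify $\partial E\subset E^{+}\cup E^{-}$ by splitting into the case $x\in Z_{k}$ (handled by~(b)) and the case $x\notin Z_{k}$ for all $k$ (handled by picking an active index and invoking Proposition~\ref{prop:etf} via~(a)), then conclude with Proposition~\ref{prop:bounce}.
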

Let $w\colon \mathbb R \times 
\Omega\to \mathbb R^n$ be of $C^1$-class. 
The vector-field 
$v\colon  
\mathbb R\times \Omega\to \mathbb R\times\mathbb R^n$ 
given by
\begin{equation}
\label{eq:vw}
v(t,x)=\begin{bmatrix} 1 \\ w(t,x)\end{bmatrix}.
\end{equation}
generates a dynamical system $\phi$ on $\mathbb R\times\Omega$
of the form
\[
\phi_t(t_0,x_0)=(t_0+t,\Psi(t_0,t_0+t,x_0)),
\]
where 
$t\to \Psi(t_0,t,x_0)$ is the maximal solution of the nonautonomous equation
\begin{equation}
\label{eq:xft}
\dot x=w(t,x)
\end{equation}
with the initial value
$x(t_0)=x_0$.
$\Psi$ is called the \emph{evolution operator} generated
by $w$. It satisfies $\Psi(t,t,x)=x$ and $\Psi(s,u,x)=\Psi(t,u,\Psi(s,t,x))$.
\par
Let $T>0$ and assume  
$t\to w(t,x)$ is $T$-periodic for every $x$, hence 
\[
\Psi(s+T,t+T,x)=\Psi(s,t,x).
\]
The map $P\colon x\to \Psi(0,T,x)$ is called the 
\emph{Poincar\'e operator}. Denote
by $\operatorname{Fix}(P)$ the set of all fixed points of $P$. 
\begin{prop}
\label{prop:poin}
$x_0\in \operatorname{Fix}(P)$ if and only if
$t\to \Psi(0,t,x_0)$ is a $T$-periodic solution of \eqref{eq:xft}.
\hfill\qed
\end{prop}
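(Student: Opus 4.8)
The plan is to derive both implications purely from the structural identities of the evolution operator recorded just above the statement: $\Psi(t,t,x)=x$, the cocycle relation $\Psi(s,u,x)=\Psi(\tau,u,\Psi(s,\tau,x))$, and the shift relation $\Psi(s+T,t+T,x)=\Psi(s,t,x)$, the last one being a consequence of the assumed $T$-periodicity of $t\mapsto w(t,x)$. Recall also that, by the very definition of $\Psi$, the curve $t\mapsto \Psi(0,t,x_0)$ is already the maximal solution of \eqref{eq:xft} with $x(0)=x_0$; so the only substantive content of the statement is the equivalence between $P(x_0)=x_0$ and this maximal solution being global and $T$-periodic.

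For the direction ``$T$-periodic solution $\Rightarrow$ fixed point'', suppose $t\mapsto \Psi(0,t,x_0)$ is a $T$-periodic solution. Evaluating $\Psi(0,t+T,x_0)=\Psi(0,t,x_0)$ at $t=0$ and using $\Psi(0,0,x_0)=x_0$ gives $P(x_0)=\Psi(0,T,x_0)=x_0$, hence $x_0\in\operatorname{Fix}(P)$.

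For the converse, assume $x_0\in\operatorname{Fix}(P)$, that is $\Psi(0,T,x_0)=x_0$; in particular the solution through $x_0$ is defined at least on $[0,T]$. For any $t$ at which $\Psi(0,t,x_0)$ is defined,
\[
\Psi(0,t+T,x_0)=\Psi\bigl(T,\,t+T,\,\Psi(0,T,x_0)\bigr)=\Psi(T,t+T,x_0)=\Psi(0,t,x_0),
\]
the first equality being the cocycle relation with $s=0$, $u=t+T$, $\tau=T$, the second using $\Psi(0,T,x_0)=x_0$, and the third being the shift relation with $s=0$. This identity lets one extend $t\mapsto\Psi(0,t,x_0)$ from $[0,T]$ to all of $\mathbb R$ by translating the piece over $[0,T]$ by integer multiples of $T$: the translated pieces agree at the common endpoints because the solution returns to $x_0$ at time $T$, and uniqueness of solutions together with the $T$-periodicity of $w$ guarantees that the glued curve is again a solution of \eqref{eq:xft}. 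The resulting global solution satisfies $\Psi(0,t+T,x_0)=\Psi(0,t,x_0)$ for all $t\in\mathbb R$, hence is $T$-periodic.

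I do not anticipate a genuine obstacle: the core computation is a one-line manipulation of the semigroup identities. The only point deserving a sentence of care is the extension step — upgrading ``$\Psi(0,T,x_0)$ is defined and equals $x_0$'' to ``the maximal solution through $x_0$ is defined on all of $\mathbb R$'' — which is precisely the gluing-by-periodicity argument above, resting on the fact that a solution which returns to its initial value cannot leave every compact set in finite time.
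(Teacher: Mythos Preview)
Your argument is correct; the manipulation of the cocycle and shift identities is exactly the standard one, and you handle the domain/extension issue carefully. The paper itself gives no proof at all for this proposition (the \qed\ is placed immediately after the statement, signalling that it is regarded as a routine fact), so there is nothing to compare beyond noting that you have supplied the details the author omitted.
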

Let $E\subset \mathbb R\times \Omega$. 
For $t\in \mathbb R$ define $E_t:=\{x\in\mathbb R^n
\colon (t,x)\in E\}$. 
Assume $E$ is a bound set for $v$ and
$E_0$ is compact.  Set
\begin{align*}
&K:=\{x\in \operatorname{Fix}(P)\colon
\Psi(0,t,x)\in E_t\ \forall t\in [0,T]\},
\\
&L:=\{x\in \operatorname{Fix}(P)\cap E_0\colon
\exists t\in (0,T)\colon \Psi(0,t,x)\notin E_t\}.
\end{align*}
Clearly, $\operatorname{Fix}(P)\cap E_0=K\cup L$. 
Conditions imposed on $E$ imply that
both 
$K$ and $L$ are compact, $K\subset \operatorname{int} E_0$,
and
$K\cap L=\emptyset$. It follows that
there exists open sets $U,V$ in $E_0$, $U\subset \operatorname{int} E_0$, 
such that $U\cap V=\emptyset$ and 
\begin{align}
\label{eq:uk}
&U\cap \operatorname{Fix}(P)=K,
\\
\label{eq:vl}
&V\cap \operatorname{Fix}(P)=L.
\end{align} 
In particular, the \emph{fixed point index} of $P$ at $U$ is defined;
denote it by $\operatorname{ind}(P,U)$.
We refer  to \cite{dold} for its definition and properties. Actually, the excision property of the index imply
that $\operatorname{ind}(P,U)$ does not depend on the
choice of $U$ satisfying \eqref{eq:uk}.
\par
Let $\lambda\in [0,1]$ and $T>0$. Consider a continuous family of 
non-autonomous equations
\begin{equation}
\label{eq:xfl}
\dot x = w_\lambda(t,x),
\end{equation}
where $w_\lambda\colon \mathbb R \times 
\Omega\to \mathbb R^n$ is of $C^1$-class and 
$t\to w_\lambda(t,x)$ is $T$-periodic for every $x$. 
Through remainder of this section we adopt the above notation
concerning $w$ to $w_\lambda$ writing $v_\lambda$
(as in  \eqref{eq:vw}), $P_\lambda$,
$U_\lambda$,
etc.
\begin{prop}
\label{prop:2}
Let $E\subset \mathbb R\times \Omega$ and let $E_0$ be compact. If for every $\lambda\in [0,1]$, 
$E$ is a bound set 
for $v_\lambda$ then
\[
\operatorname{ind}(P_0,U_0)=\operatorname{ind}(P_1,U_1).
\]
\end{prop}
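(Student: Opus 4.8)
The plan is to prove Proposition~\ref{prop:2} by the homotopy invariance of the fixed point index, using the bound-set hypothesis to keep the ``good'' fixed points separated from the ``bad'' ones as $\lambda$ varies. First I would record that, by continuous dependence of the solutions of~\eqref{eq:xfl} on initial conditions and on the parameter $\lambda$, the evolution operators $\Psi_\lambda$ and the Poincar\'e operators $P_\lambda$ depend continuously on $(\lambda,x)$. Hence the set
\[
\mathcal A:=\{(\lambda,x)\in[0,1]\times E_0\colon P_\lambda(x)=x\}
\]
is closed in the compact set $[0,1]\times E_0$, thus compact. Writing $K_\lambda,L_\lambda$ for the sets defined before the statement but now attached to $w_\lambda$, one has $\mathcal A=\mathcal K\cup\mathcal L$ with $\mathcal K:=\{(\lambda,x)\in\mathcal A\colon (t,\Psi_\lambda(0,t,x))\in E\ \text{for all}\ t\in[0,T]\}$ and $\mathcal L:=\mathcal A\setminus\mathcal K$, and the fibres of $\mathcal K$ and $\mathcal L$ over $\lambda$ are exactly $K_\lambda$ and $L_\lambda$ (for $x\in\operatorname{Fix}(P_\lambda)\cap E_0$ the condition at $t=0$ and $t=T$ is automatic). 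Since $E$ is closed and $(\lambda,x,t)\mapsto(t,\Psi_\lambda(0,t,x))$ is continuous, $\mathcal K$ is closed in $\mathcal A$, hence compact; and by the properties of $K$ and $L$ noted in the text just before the statement (applied to each $w_\lambda$), $\mathcal K\subset[0,1]\times\operatorname{int}E_0$ and $K_\lambda\cap L_\lambda=\emptyset$ for every $\lambda$.

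The crucial step is to show that $\mathcal K$ and $\mathcal L$ have disjoint closures, i.e. $\mathcal K\cap\overline{\mathcal L}=\emptyset$; this is exactly where the hypothesis that $E$ is a bound set for every $v_\lambda$ is used. Suppose $(\lambda_n,x_n)\in\mathcal L$ converge to $(\lambda_*,x_*)\in\mathcal K$. For each $n$ choose $t_n\in(0,T)$ with $(t_n,\Psi_{\lambda_n}(0,t_n,x_n))\notin E$, and pass to a subsequence with $t_n\to t_*\in[0,T]$. By continuity, $q:=(t_*,\Psi_{\lambda_*}(0,t_*,x_*))$ is a limit of points of the ambient space that do not lie in $E$, so $q\notin\operatorname{int}E$; on the other hand $q\in E$, because $(\lambda_*,x_*)\in\mathcal K$ and $t_*\in[0,T]$. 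Hence $q\in\partial E$. Since $x_*\in\operatorname{Fix}(P_{\lambda_*})$, the solution $t\mapsto\Psi_{\lambda_*}(0,t,x_*)$ is $T$-periodic, so the trajectory of $q$ under the dynamical system generated by $v_{\lambda_*}$, namely $s\mapsto(t_*+s,\Psi_{\lambda_*}(0,t_*+s,x_*))$, lies in $E$ for all $s$ in a neighbourhood of $0$: for $t_*+s\in[0,T]$ this is the defining property of $\mathcal K$, and the remaining values of $s$, which occur only when $t_*\in\{0,T\}$, are brought into the range $[0,T]$ by the $T$-periodicity of the solution together with the $T$-periodicity of $E$. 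This contradicts the bound-set property for $v_{\lambda_*}$, and proves $\mathcal K\cap\overline{\mathcal L}=\emptyset$.

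With this separation in hand I would fix $\lambda_0\in[0,1]$ and, using compactness of $E_0$ and of $\mathcal K$, the inclusion $\mathcal K\subset[0,1]\times\operatorname{int}E_0$, the disjointness $\mathcal K\cap\overline{\mathcal L}=\emptyset$, and continuous dependence on $\lambda$ (in particular that solutions exist on $[0,T]$ for $x$ near $K_{\lambda_0}$ and $\lambda$ near $\lambda_0$), produce by a routine compactness argument a number $\delta>0$ and a bounded open set $W$ with $\overline W\subset\operatorname{int}E_0$ such that for every $\lambda\in[0,1]$ with $|\lambda-\lambda_0|<\delta$ one has $K_\lambda\subset W$, $\overline W\cap L_\lambda=\emptyset$, and $P_\lambda$ is defined and continuous on $\overline W$. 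Then $W\cap\operatorname{Fix}(P_\lambda)=K_\lambda$, so $W$ is an admissible choice in~\eqref{eq:uk} and $\operatorname{ind}(P_\lambda,U_\lambda)=\operatorname{ind}(P_\lambda,W)$ by excision, while $\operatorname{Fix}(P_\lambda)\cap\partial W=\emptyset$ for all such $\lambda$; homotopy invariance of the fixed point index then shows $\lambda\mapsto\operatorname{ind}(P_\lambda,W)$ is constant on that $\lambda$-interval. Thus $\lambda\mapsto\operatorname{ind}(P_\lambda,U_\lambda)$ is locally constant on $[0,1]$, hence constant, which gives $\operatorname{ind}(P_0,U_0)=\operatorname{ind}(P_1,U_1)$. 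The main obstacle is the separation argument of the previous paragraph — ruling out that a sequence of periodic solutions, each leaving $E$ at some interior time, converges to a periodic solution of the limit equation that stays in $E$ throughout $[0,T]$; only the uniform-in-$\lambda$ bound-set hypothesis prevents this, and a little care is needed at the endpoints $t_*\in\{0,T\}$.
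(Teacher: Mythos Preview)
Your proposal is correct and follows essentially the same strategy as the paper: show that $\lambda\mapsto\operatorname{ind}(P_\lambda,U_\lambda)$ is locally constant by producing, for each $\lambda_0$, a single open set containing $K_\lambda$ and avoiding $L_\lambda$ for all $\lambda$ near $\lambda_0$, then invoke homotopy invariance. The paper does this by fixing $U_{\lambda_0},V_{\lambda_0}$ and arguing by contradiction that $K_\lambda\subset U_{\lambda_0}$ and $L_\lambda\subset V_{\lambda_0}$ for nearby $\lambda$; you package the same two contradictions as the single statement $\mathcal K\cap\overline{\mathcal L}=\emptyset$ in $[0,1]\times E_0$, which is a tidy reformulation but not a different method.

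One minor remark: your explicit treatment of the limit case $t_*\in\{0,T\}$, appealing to $T$-periodicity of $E$, makes visible an assumption the paper leaves implicit (it is needed already for the claim $K\subset\operatorname{int}E_0$ stated just before the proposition, and is trivially satisfied in the application where $E=\mathbb R\times B$). The paper's proof compresses the analogous step for $L_\lambda\subset V_{\lambda_0}$ into the phrase ``in a similar way,'' whereas you spell out that this is precisely where the bound-set hypothesis enters; your account is therefore somewhat more complete, but the underlying argument is the same.
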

\begin{proof}
Fix $\lambda_0\in [0,1]$. 
There is an $\epsilon>0$ such that 
\[
|P_{\lambda_0}(x)-x|>\epsilon
\]
for $x\in E_0\setminus (U_{\lambda_0}\cup V_{\lambda_0})$.
Therefore, if $\lambda$
is sufficiently close to $\lambda_0$  then 
\[
\operatorname{Fix}(P_\lambda)\cap E_0\subset 
U_{\lambda_0}\cup V_{\lambda_0}
\]
and, by the homotopy property of the index,
\begin{equation}
\label{eq:ulv}
\operatorname{ind}(P_\lambda,U_{\lambda_0})=
\operatorname{ind}(P_{\lambda_0},U_{\lambda_0}).
\end{equation}
In order to finish the proof on should show that if $\lambda$
is close enough to $\lambda_0$ then
\begin{align}
\label{eq:klam} 
&K_\lambda \subset U_{\lambda_0},
\\ 
\label{eq:llam}
&L_\lambda\subset V_{\lambda_0}.
\end{align}
Indeed, in that case $K_\lambda\subset U_{\lambda_0}$, hence one can treat $U_{\lambda_0}$ as
$U_\lambda$ in the equation \eqref{eq:ulv} and 
therefore the function
$\lambda\to \operatorname{ind}(P_\lambda,U_\lambda)$
is locally constant for $\lambda\in [0,1]$, hence it is constant
and the result follows.
\par
For a proof of \eqref{eq:klam}
assume on the contrary that there exist $\lambda_n\to \lambda_0$
and $x_n\in V_{\lambda_0}\cap K_{\lambda_n}$. One can assume
$x_n\to x_0$. Since $\Psi_{\lambda_n}(0,t,x_n)\in E_t$ for
each $t\in [0,T]$ and $E$ is closed, 
$x_0\in \overline{V}_{\lambda_0}\cap K_{\lambda_0}$, which
is impossible. In a similar way the inclusion \eqref{eq:llam} follows.
\end{proof}
Assume now that $w$ is $t$-independent, i.e. $w\colon \Omega\to \mathbb R^n$
is a $C^1$-class vectorfield. In this case the Poincar\'e operator $P$ associated
with $T$-periodic solutions of \eqref{eq:xft} is equal to $\phi_T$, where
$\phi$
is the dynamical system generated by $w$.
Assume that
$E\subset \Omega$ is a bound set for $v$. Following the notation introduced
above we denote by 
$U$ be the open subset of $\operatorname{int} E$ satisfying
\eqref{eq:uk}.
By
$\operatorname{deg}(w,V,0)$ we denote the \emph{topological
degree} at $0$ of the $w$ in an open set $V$
(see \cite{deim} for the definition and properties). 

\begin{prop}
\label{prop:brouw}
$
\operatorname{ind}(\phi_T,U)=
(-1)^n\operatorname{deg}(v,\operatorname{int}E,0).
$
\end{prop}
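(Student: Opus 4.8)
The plan is to deform the time-$T$ Poincar\'e operator $P=\phi_T$ into a time-$\tau$ map with $\tau$ so short that its only fixed points are the rest points of $w$, and then to recognise the fixed point index of that map as a Brouwer degree; the factor $(-1)^n$ will appear because, to first order in $\tau$, $\operatorname{id}-\phi_\tau=-\tau w$. Two consequences of the bound set hypothesis will be used. If $w(x_0)=0$ for some $x_0\in\partial E$, the constant trajectory through $x_0$ stays in $E$ for all $t$, contradicting the definition of a bound set; hence $w$ has no zero on $\partial E$, and since $\operatorname{int}E$ is bounded, $\operatorname{deg}(w,\operatorname{int}E,0)$ is defined. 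Also, if $x\in\operatorname{Fix}(\phi_T)$ and $\phi_t(x)\in E$ for $t\in[0,T]$, then its orbit, being periodic, lies in $E$ for all $t$, so by the bound set property it lies in $\operatorname{int}E$; thus $K\subset\operatorname{int}E$, and the same reasoning applies to every periodic orbit contained in $E$.

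I would first apply Proposition~\ref{prop:2} to the family $w_\lambda:=\bigl((1-\lambda)\delta+\lambda\bigr)w$, $\lambda\in[0,1]$, where $\delta\in(0,1)$ is fixed in a moment. Each $w_\lambda$ is a positive multiple of $w$, so the orbits of the associated field $v_\lambda$ coincide, as point sets, with those of $v$ up to a change of time scale; hence $E$ is a bound set for every $v_\lambda$ and, $E_0=E$ being compact, Proposition~\ref{prop:2} yields $\operatorname{ind}(\phi_T,U)=\operatorname{ind}(\phi_\tau,U_0)$ for any admissible $U_0$, with $\tau:=\delta T$ and $\phi_\tau$ the Poincar\'e operator of $\dot x=w_0(x)$. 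Now choose $\delta$ so small that $\tau<2\pi/L$, where $L$ is a Lipschitz constant of $w$ on a compact neighbourhood of $E$ in $\Omega$. By Yorke's lower bound for the period of a periodic orbit, $w$ has no periodic orbit of period $\le\tau$ contained in $E$; with the first paragraph this gives $Z\cap E=Z\cap\operatorname{int}E=:K_0$ for $Z:=\{w=0\}$, and it shows that every periodic point of $w$ of period $\le\tau$ lying in $\operatorname{int}E$ has an orbit that leaves $E$.

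Such points cannot accumulate at $K_0$: by continuous dependence on initial conditions, an orbit starting near an equilibrium of $\operatorname{int}E$ remains in $\operatorname{int}E$ over $[0,\tau]$, hence, if periodic of period $\le\tau$, for all time — a contradiction. Consequently, taking an open $U_0$ with $K_0\subset U_0$ and $\overline{U_0}\subset\operatorname{int}E$ small enough, $\partial U_0$ contains no zero of $w$ (all zeros in $\operatorname{int}E$ lie in $K_0\subset U_0$) and no periodic point of period $\le\tau$; in particular $\phi_\tau$ has no fixed point on $\partial U_0$, $U_0$ is admissible, and $\operatorname{ind}(\phi_\tau,U_0)=\operatorname{deg}(\operatorname{id}-\phi_\tau,U_0,0)$. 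I would then use the homotopy $h_s(x):=s^{-1}\bigl(x-\phi_s(x)\bigr)$ for $s\in(0,\tau]$, with $h_0:=-w$; it is continuous on $[0,\tau]\times\overline{U_0}$ because $h_s(x)=-\int_0^1 w\bigl(\phi_{\sigma s}(x)\bigr)\,d\sigma$. For every $s\in[0,\tau]$ a zero of $h_s$ on $\partial U_0$ would be either a zero of $w$ or a periodic point of period $\le\tau$, both excluded; hence $\operatorname{deg}(\operatorname{id}-\phi_\tau,U_0,0)=\operatorname{deg}(-w,U_0,0)=(-1)^n\operatorname{deg}(w,U_0,0)$, and by excision $\operatorname{deg}(w,U_0,0)=\operatorname{deg}(w,\operatorname{int}E,0)$. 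Chaining the equalities gives the assertion.

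The point I expect to be the crux is the control of fixed points on $\partial U_0$ during the two deformations: the bound set property is exactly what keeps the rest points of $w$ off $\partial E$, while passing to the short time $\tau<2\pi/L$ collapses $\operatorname{Fix}(\phi_\tau)$ onto $Z$ inside $E$, making the homotopy between $\operatorname{id}-\phi_\tau$ and $-w$ nonsingular on $\partial U_0$. If one wishes to avoid invoking Yorke's estimate, the same end can be reached by showing directly, via continuous dependence and the bound set property, that $K_0$ is relatively open and closed in the set of period-$\le\tau$ points of $w$ lying in $E$; the period bound merely makes this bookkeeping transparent.
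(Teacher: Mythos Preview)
Your proof is correct and follows the same two-step structure as the paper: first reduce $\operatorname{ind}(\phi_T,U)$ to $\operatorname{ind}(\phi_\tau,U_\tau)$ for a short time $\tau$ via the continuation argument underlying Proposition~\ref{prop:2}, then identify the short-time index with $(-1)^n\operatorname{deg}(w,\operatorname{int}E,0)$. The paper carries out the first step by the same rescaling idea (phrased as ``the argument in the proof of Proposition~\ref{prop:2}'') and disposes of the second by citing \cite[Theorem~5.1]{fm}. You instead unpack that citation: Yorke's period bound gives an explicit threshold $\tau<2\pi/L$ below which the only fixed points of $\phi_\tau$ with orbit in $E$ are equilibria, and the homotopy $h_s=s^{-1}(\operatorname{id}-\phi_s)\to -w$ then converts the fixed point index into the Brouwer degree. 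So your argument is essentially the paper's, with the black box replaced by a short self-contained computation; the Yorke estimate is a convenient device for making ``$\epsilon$ small enough'' explicit, but, as you remark, the same isolation of $K_0$ from short periodic orbits follows directly from continuous dependence together with the bound-set hypothesis.
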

\begin{proof} Let
$0<\epsilon\leq T$.
By assumptions and the argument in the proof of Proposition~\ref{prop:2},
\[
\operatorname{ind}(\phi_T,U)=
\operatorname{ind}(\phi_\epsilon,U_\epsilon)
\]
where $U_\epsilon\subset \operatorname{int} E$ is an open neighborhood of 
$\epsilon$-periodic points such that their orbits are contained in 
the interior of $E$.
If $\epsilon$ is small enough then
 \cite[Theorem~5.1]{fm} implies
\[
\operatorname{ind}(\phi_\epsilon,U_\epsilon)=
(-1)^n\operatorname{deg}(v,\operatorname{int}E,0),
\]
hence the result follows.
\end{proof}
Now we formulate the key theoretical result for the
proof of the existence of periodic solutions in the Whitney's problem.
As above, we consider the continuous family of equations 
\eqref{eq:xfl}, $\lambda\in [0,1]$, where $w_\lambda$ is 
$T$-periodic with respect to $t$.
\begin{thm}[compare Corollary~3 in \cite{cmz}]
\label{thm:fl}
Let $B$ be a compact subset of $\Omega$.
Assume $\mathbb R\times B$ is a bound
set for $v_\lambda$,
$0\leq \lambda\leq 1$. Assume moreover that $w_0$ is
$t$-independent
and 
\[
\operatorname{deg}(w_0,\operatorname{int}B,0)\neq 0.
\]
Then  the
equation \eqref{eq:xfl} for $\lambda=1$
has a $T$-periodic solution with image contained in $B$.
\end{thm}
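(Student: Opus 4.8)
The plan is to obtain the periodic solution by continuation of the fixed point index along the family \eqref{eq:xfl}, combining Propositions~\ref{prop:2}, \ref{prop:brouw} and \ref{prop:poin} with the basic existence property of the index. Put $E:=\mathbb R\times B$, so that $E_t=B$ for every $t$ and $E_0=B$ is compact; by hypothesis $E$ is a bound set for each $v_\lambda$. Hence the construction preceding Proposition~\ref{prop:poin} applies to every $\lambda\in[0,1]$, producing the compact sets $K_\lambda$, $L_\lambda$, the disjoint open sets $U_\lambda\subset\operatorname{int}B$ and $V_\lambda$ satisfying \eqref{eq:uk} (written for $\lambda$), and the well-defined index $\operatorname{ind}(P_\lambda,U_\lambda)$.

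First I would invoke Proposition~\ref{prop:2} with this $E$ to conclude $\operatorname{ind}(P_0,U_0)=\operatorname{ind}(P_1,U_1)$. Next, since $w_0$ is $t$-independent, its Poincar\'e operator is $P_0=\phi_T$, where $\phi$ is the dynamical system generated by $w_0$; moreover the statement that $\mathbb R\times B$ is a bound set for $v_0$ is precisely the statement that $B$ is a bound set for $w_0$, since the corresponding flows differ only by a shift in the time coordinate. Thus Proposition~\ref{prop:brouw} applies and gives $\operatorname{ind}(P_0,U_0)=(-1)^n\operatorname{deg}(w_0,\operatorname{int}B,0)$. Together with the assumption $\operatorname{deg}(w_0,\operatorname{int}B,0)\neq 0$ this yields $\operatorname{ind}(P_1,U_1)\neq 0$.

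By the existence property of the fixed point index (a nonzero index forces a fixed point) there is then a point $x_0\in U_1$ with $P_1(x_0)=x_0$. Since $U_1\cap\operatorname{Fix}(P_1)=K_1$ by \eqref{eq:uk}, we have $x_0\in K_1$, so by the definition of $K_1$ the orbit satisfies $\Psi_1(0,t,x_0)\in E_t=B$ for all $t\in[0,T]$. By Proposition~\ref{prop:poin}, $t\mapsto\Psi_1(0,t,x_0)$ is a $T$-periodic solution of \eqref{eq:xfl} with $\lambda=1$; being $T$-periodic it is defined on all of $\mathbb R$ (it stays in the compact set $B$), and its image over $\mathbb R$ equals its image over $[0,T]$, hence is contained in $B$. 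This is exactly the asserted conclusion.

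Every step is a direct application of a result already established, so there is no genuinely hard analytic point; the only things that need care are the purely organisational matches --- that the open set $U_\lambda$ figuring in Proposition~\ref{prop:2} is the same canonical neighbourhood satisfying \eqref{eq:uk} that Proposition~\ref{prop:brouw} refers to, and the (routine) identification of ``$\mathbb R\times B$ is a bound set for $v_0$'' with ``$B$ is a bound set for $w_0$'', which is what licenses the use of Proposition~\ref{prop:brouw}. I expect no obstacle beyond this bookkeeping.
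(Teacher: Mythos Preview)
Your proposal is correct and follows exactly the paper's approach: the paper's proof is a single sentence stating that the result is a direct consequence of Propositions~\ref{prop:poin}, \ref{prop:2}, and \ref{prop:brouw}, and you have simply spelled out how those three pieces fit together. The organisational checks you flag (matching the $U_\lambda$ in Proposition~\ref{prop:2} with that in Proposition~\ref{prop:brouw}, and identifying ``$\mathbb R\times B$ is a bound set for $v_0$'' with ``$B$ is a bound set for $w_0$'') are exactly the right glue, and nothing more is needed.
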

\begin{proof}
This result is a direct consequence of 
Propositions~\ref{prop:poin}, \ref{prop:2}, and
 \ref{prop:brouw}. 
\end{proof}

\section{Periodic solutions in the linear Whitney's problem}
\label{sec:plin}
We begin with deriving the equation corresponding to the linear Whitney's problem
in the Cartesian coordinates. We assume that the whole mass $m$ of the rod is concentrated
at its top. 
Let $x(t)$ and $y(t)$ denote the horizontal and, respectively,
vertical position of the top of the rod with respect
to the pivot at time $t$ and let $\ell$ be equal to the length of the rod, i.e. to the distance from the
pivot to the top, thus
\begin{equation}
\label{eq:yt}
y(t)=\sqrt{\ell^2-x(t)^2}.
\end{equation}
The position of the pivot with respect to the
origin at time $t$ is equal to $f(t)$, hence the acceleration of the top
is equal to $\ddot x(t)+\ddot f(t)$. The constraint force is perpendicular to the arc
$y=\sqrt{\ell^2-x^2}$, hence the forces imposed on the top of the rod are given 
by the system of equations
\begin{align}
\label{eq:xx}
&m(\ddot x(t)+\ddot f(t))= \mu(t)x(t),
\\
\label{eq:yy}
&m \ddot y(t)=-mg +\mu(t)\sqrt{\ell^2-x(t)^2},
\end{align}
where $m>0$ is the mass of the rod, $g>0$ is the gravitational constant,
and $\mu$ is an unknown function such that $\ell\mu(t)$ is equal to 
the  magnitude of the constraint force at time $t$.
It follows by \eqref{eq:yt} and \eqref{eq:yy},
\begin{align*}
&\mu=m\frac{g+\ddot y}{\sqrt{\ell^2-x^2}},
\\
&\ddot y=- \frac{x\ddot x+\dot x^2}{\sqrt{\ell^2-x^2}}-\frac{x^2\dot x^2}{\left(\ell^2-x^2\right)^{3/2}},
\end{align*}
hence, by \eqref{eq:xx},
\[
\ddot x+\ddot f(t)=\frac{g}{\sqrt{\ell^2-x^2}}x-\frac{x^2\ddot x+x\dot x^2}{\ell^2-x^2}-
\frac{x^3\dot x^2}{\left(\ell^2-x^2\right)^2}
\]
which is equivalent to the nonautonomous equation
\begin{equation}
\label{eq:linear}
\ddot x=\left(\frac{g}{\ell^2}\sqrt{\ell^2-x^2}-\frac{\dot x^2}{\ell^2-x^2}\right)x
-\frac{\ell^2-x^2}{\ell^2}\ddot f(t).
\end{equation}
After changing the variable $x$ to $\frac{1}{\ell}x$ and rescaling $g$ and $\ddot f$ to
\[
G:=\frac{g}{\ell},\quad F:=\frac{\ddot f}{\ell},
\]
the equation \eqref{eq:linear} assumes a simpler form
\begin{equation}
\label{eq:sl}
\ddot x=\left(G\sqrt{1-x^2}-\frac{\dot x^2}{1-x^2}\right)x-(1-x^2)F(t).
\end{equation}
The equation \eqref{eq:sl} induces the system
\begin{align}
\label{eq:tlin}
&\dot t=1,
\\
\label{eq:xlin}
&\dot x=p,
\\
\label{eq:plin}
&\dot p=\left(G\sqrt{1-x^2}-\frac{p^2}{1-x^2}\right)x-\lambda(1-x^2)F(t)
\end{align}
with parameter $\lambda\in [0,1]$.
Following the notation used in Section~\ref{sec:some} we 
denote by 
$w_\lambda$ be the right-hand side of 
\eqref{eq:xlin},\eqref{eq:plin} and by 
$v_\lambda$
the right-hand side of \eqref{eq:tlin},\eqref{eq:xlin},\eqref{eq:plin}.
\par
We assume $F$ is continuous and $T$-periodic.
We are looking for a bound set for $v_\lambda$ of the form
$\mathbb R\times\Gamma_a\cap \Delta_b$, where $0<a<1$, $b>0$, and
\begin{align*}
&
\Gamma_a:=\{(x,p)\in\mathbb R\times 
\mathbb R\colon  |x|\leq a\},
\\
& \Delta_b=\{(x,p)\in \mathbb R\times 
\mathbb R\colon |x|<1,\ b|x|+|p|\leq b\}. 
\end{align*}

\begin{lem}
\label{lem:galin}
There exists an $a_0\in (0,1)$ such that if $a_0\leq a<1$ then
$\mathbb R\times \Gamma_a$ is a bound set for $v_\lambda$ for all $\lambda \in [0,1]$.  
\end{lem}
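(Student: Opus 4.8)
The plan is to verify that $E := \mathbb R \times \Gamma_a$ is a bound set for $v_\lambda$ by applying Corollary~\ref{cor:waz}, which reduces the task to exhibiting suitable curvature bound functions on the boundary. The set $\Gamma_a = \{|x| \le a\}$ is cut out by the two functions $e_\pm(x,p) := \pm x - a$, so $\partial E = (\mathbb R \times \{x = a\}) \cup (\mathbb R \times \{x = -a\})$, and by symmetry it suffices to treat $e(t,x,p) := x - a$ on the face $x = a$ (the case $x = -a$ being identical after $x \mapsto -x$). I would take $Z_k = \emptyset$ in Corollary~\ref{cor:waz}, so the entire job is to show that $e$ is a curvature bound function for $v_\lambda$ in a neighborhood of $\{x = a\}$, uniformly in $\lambda \in [0,1]$, for $a$ sufficiently close to $1$.

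Concretely, with $v_\lambda = (1, p, q_\lambda)^T$ where $q_\lambda = (G\sqrt{1-x^2} - p^2/(1-x^2))x - \lambda(1-x^2)F(t)$, one computes $(De)v_\lambda = \dot x = p$, so the implication \eqref{eq:ext} is only tested at points where $p = 0$. At such a point the required inequality becomes $v_\lambda^T(D^2e)v_\lambda + (De)(Dv_\lambda)v_\lambda > 0$; since $e$ is linear, $D^2 e = 0$, and $(De)(Dv_\lambda)v_\lambda$ is just the $\dot x$-component of $Dv_\lambda \cdot v_\lambda$, which equals $\dot p = q_\lambda$. Evaluating $q_\lambda$ at $x = a$, $p = 0$ gives $q_\lambda = G a\sqrt{1-a^2} - \lambda(1-a^2)F(t)$. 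So the condition to check is $Ga\sqrt{1-a^2} - \lambda(1-a^2)F(t) > 0$ for all $t$ and all $\lambda \in [0,1]$. Dividing by $\sqrt{1-a^2} > 0$, this is $Ga > \lambda \sqrt{1-a^2}\, F(t)$, and since $\lambda \le 1$ and $|F(t)| \le \|F\|$, it is enough that $Ga > \sqrt{1-a^2}\,\|F\|$. As $a \to 1^-$ the left side tends to $G > 0$ and the right side tends to $0$, so there exists $a_0 \in (0,1)$ such that the inequality holds for all $a \in [a_0,1)$; this also forces the second bullet of Proposition~\ref{prop:etf} (a strict local minimum, i.e. the orbit bounces back into $\{x < a\}$), which is exactly what is needed. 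The same $a_0$ works for the face $x = -a$ by symmetry.

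One technical point I would be careful about: the curvature bound function in Corollary~\ref{cor:waz}(a) must be $C^2$ on an open neighborhood, and the relevant neighborhood should avoid $|x| = 1$ where $q_\lambda$ is singular — but since $a < 1$ we may simply take the neighborhood $\{|x - a| < (1-a)/2\}$ of the face, on which everything is smooth, and likewise near $x = -a$. Also, strictly speaking Corollary~\ref{cor:waz} is stated for an autonomous vectorfield on $\Omega$; here the role of $\Omega$ is played by $\mathbb R \times \mathbb R^2$ with coordinate $(t,x,p)$ and the autonomous field $v_\lambda$, and the functions $e_k$ and neighborhoods are taken independent of $t$, so the hypotheses apply verbatim with $Z_k = \emptyset$.

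I do not expect any serious obstacle here: the computation is short and the only real content is the limiting argument $Ga \to G > 0$, $\sqrt{1-a^2}\,\|F\| \to 0$ as $a \to 1$, which delivers the threshold $a_0$ uniformly in $\lambda$. If anything, the mild subtlety is organizing the two faces $x = \pm a$ so that a single $a_0$ serves both and all $\lambda$ simultaneously, and confirming that the sign in \eqref{eq:ext} comes out correctly (it does, because the inverted-pendulum restoring term $Ga\sqrt{1-a^2}$ is positive and dominates the bounded forcing when $a$ is near the fully-horizontal configuration). Once $E = \mathbb R \times \Gamma_a$ is shown to be a bound set, Proposition~\ref{prop:bounce} is invoked implicitly through Corollary~\ref{cor:waz}, completing the proof.
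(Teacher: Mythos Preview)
Your argument is correct and essentially the same as the paper's: both reduce to the tangency case $p=0$ on the faces $x=\pm a$ and arrive at the identical threshold inequality $Ga>\sqrt{1-a^2}\,\|F\|$; you package this via Corollary~\ref{cor:waz} with the linear curvature bound functions $e_\pm=\pm x-a$, while the paper applies Proposition~\ref{prop:bounce} directly through a case split on the sign of $p_0$. One small slip in your commentary: the positivity of $q_\lambda$ at $(t,a,0)$ yields the \emph{third} alternative of Proposition~\ref{prop:etf} (a strict local minimum of $e$, so the orbit lies \emph{outside} $\{x\le a\}$ on both sides of $t_0$, i.e.\ the point is in $E^+\cap E^-$), not the second, and not a bounce back into $\{x<a\}$ --- but this does not affect the validity of the curvature-bound-function verification.
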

\begin{proof}
We apply 
Proposition~\ref{prop:bounce}. 
Let $t_0\in \mathbb R$ and $0<a<1$. 
If $p_0>0$ then $(t_0,a,p_0)
\in (\mathbb R\times\Gamma_a)^-$ since $\dot x(t_0)>0$ for the solution $t\to ((x(t),p(t))$
of the system \eqref{eq:xlin},\eqref{eq:plin} with the initial value 
$(x(t_0),p(t_0))=(a,p_0)$. Similarly, if $p_0<0$ then $(t_0,a,p_0)\in 
(\mathbb R\times \Gamma_a)^+$.  Now let $t\to (x(t),p(t))$ be the solution 
of \eqref{eq:xlin},\eqref{eq:plin} with the initial value
$(x(t_0),p(t_0))=(a,0)$. We assert that if
\begin{equation}
\label{eq:ggg}
Ga\sqrt{1-a^2}-\lambda(1-a^2)F(t_0)>0 
\end{equation}
then $(t_0,a,0)\in (\mathbb R\times \Gamma_a)^-\cap
(\mathbb R\times \Gamma_a)^+$.
Indeed, if \eqref{eq:ggg} holds then $\dot p(t_0)>0$, hence
there exists $\epsilon>0$ such that $p(t)<0$, hence $\dot x(t)<0$, for $t\in (t_0-\epsilon,t_0)$
and $p(t)>0$, hence $\dot x(t)>0$, for $t\in (t_0,t_0+\epsilon)$. 
This means $t\to x(t)$ has a strict local minimum at $t_0$ and the 
assertion follows.
In a similar way we treat the points of the form $(t_0,-a,p_0)$;
in the case $p_0=0$ the strict local minimum at $t_0$ is guaranteed if
\begin{equation}
\label{eq:ghh}
-Ga\sqrt{1-a^2}-\lambda(1-a^2)F(t_0)<0.
\end{equation}
It is clear that \eqref{eq:ggg} and \eqref{eq:ghh} are satisfied for all $\lambda\in [0,1]$ if 
\[
Ga-\|F\|\sqrt{1-a^2}>0
\]
which implies the conclusion.
\end{proof}

Through reminder of this section we assume $a$ satisfies
Lemma~\ref{lem:galin}.
\begin{lem}
\label{lem:2lin}
There exists $b>0$ such that the set $\mathbb R
\times\Gamma_a\cap \Delta_b$ 
is a bound set for $v_\lambda$ for all $\lambda\in [0,1]$.
\end{lem}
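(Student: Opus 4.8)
The plan is to apply Corollary~\ref{cor:waz} with the two defining functions $e_1(x,p)=x^2-a^2$ and $e_2(x,p)=b|x|+|p|-b$ restricted to $\{|x|<1\}$, so that $E=\mathbb R\times\Gamma_a\cap\Delta_b=\{e_1\le 0\}\cap\{e_2\le 0\}$. Lemma~\ref{lem:galin} already handles the faces where $e_1=0$ (i.e. $|x|=a$): on those faces, outside a small neighborhood of the two ``corner'' points $(\pm a,0)$ where $e_2$ might also vanish, $e_1$ is a curvature bound function, and the corner points themselves lie in $(\mathbb R\times\Gamma_a)^+\cup(\mathbb R\times\Gamma_a)^-$ by the strict-local-minimum argument there. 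So the work left is the faces $\{e_2=0,\ e_1<0\}$, i.e. the four edges $b|x|+|p|=b$ with $|x|<a$, where I must produce $b>0$ making $e_2$ a curvature bound function, up to a closed exceptional set contained in $E^-\cup E^+$.

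First I would split $\{e_2=0\}$ into the four open edges according to the signs $\sigma=\operatorname{sgn}x$, $\tau=\operatorname{sgn}p$, on which $e_2(x,p)=b\sigma x+\tau p-b$ is smooth with $De_2=\begin{bmatrix} b\sigma & \tau\end{bmatrix}$ and $D^2e_2=0$. Then condition \eqref{eq:ext} for $v_\lambda$ reduces to: whenever $(De_2)v_\lambda=b\sigma p+\tau\dot p=0$ one needs $(De_2)(Dv_\lambda)v_\lambda>0$, which after expanding is $\tau\big(b\sigma\dot p+p\,\partial_x\dot p+\dot p\,\partial_p\dot p\big)>0$ wait -- more cleanly it is $b\sigma\dot x+\tau\ddot x>0$ evaluated along the flow, i.e. the second derivative $\ddot e_2=b\sigma p+\tau\dot p$ differentiated once more, so the requirement is $\frac{d}{dt}(b\sigma p+\tau\dot p)=b\sigma\dot p+\tau\ddot p>0$ wait, I should phrase it as: $e_2\circ\phi_t$ has $\dot e_2=b\sigma p+\tau\dot p$ and $\ddot e_2=b\sigma\dot p+\tau\ddot p$, where $\dot p$ is the right-hand side \eqref{eq:plin} and $\ddot p$ its time-derivative along the flow; on the locus $\dot e_2=0$ one substitutes $p=-\tau b\sigma \dot p$ (so $p$ is controlled by $\dot p$) and shows $\ddot e_2>0$. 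Since $|x|<a$ is bounded away from $1$, the coefficients $G\sqrt{1-x^2}$, $\frac{1}{1-x^2}$, $(1-x^2)$ and their $x$-derivatives are bounded on $\Gamma_a$, and $F$ is bounded; the key structural point is that on the segment $|x|<a$ the point $(x,p)$ with $b|x|+|p|=b$ has $|p|=b-b|x|\ge b(1-a)>0$ bounded below, so in the expression for $\ddot e_2$ the dominant term as $b\to\infty$ comes from the $p^2$ or $p$-dependence with the correct sign, and all error terms are $O(1)$ or $O(b)$ against a leading term of order $b^2$ (or similar). Choosing $b$ large then forces $\ddot e_2>0$ uniformly in $\lambda\in[0,1]$ and in $t$, on the closed part of each edge with $|x|\le a-\delta$ for any fixed small $\delta$.

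This leaves the two ``vertices'' $(0,\pm b)$ of $\Delta_b$ and the four ``corner'' points $(\pm a,0)$ and $(\pm a, \pm b(1-a))$ wait -- the genuinely problematic points are where $e_2=0$ meets $e_1=0$, namely $(\pm a,\pm b(1-a))$ -- call that finite set $Z$; I would put $Z$ (or a small closed neighborhood of it intersected with $\partial E$) into the exceptional sets $Z_1\subset\partial E$ and $Z_2\subset\partial E$ and verify $Z\subset E^-\cup E^+$ directly: at such a corner both $|x|=a$ and $b|x|+|p|=b$ hold with $p\ne 0$, so $\dot x=p\ne 0$ has a definite sign, the orbit immediately leaves $\Gamma_a$ in one time direction, hence the point is in $E^+$ or $E^-$. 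I should also double-check the top/bottom vertices $(0,\pm b)$: there two edges of $\Delta_b$ meet but $e_1<0$ strictly, so I only need a curvature-type argument there too, or I observe that at $(0,b)$ one has $\dot x=p=b>0$ so again $e_2$ increases along the flow in forward time -- actually the cleaner treatment is to note these vertices lie in $E^-\cup E^+$ as well and absorb them into $Z_2$.

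The main obstacle I anticipate is the bookkeeping in the second-derivative inequality $\ddot e_2>0$ on the edges: one must substitute the constraint $\dot e_2=0$ to eliminate one variable, expand $\ddot p$ using \eqref{eq:plin} and $\dot x=p$, and then verify that for $b$ large the sign is forced uniformly over $\lambda\in[0,1]$, over $t\in[0,T]$ (using only $\|F\|<\infty$), and over $|x|\le a$. The smoothness hypotheses are exactly what is needed: $e_2$ is only piecewise $C^2$ but on each open edge it is smooth, and $v_\lambda$ is $C^1$ because $F$ is continuous (not more), which is why Corollary~\ref{cor:waz} is stated for a $C^1$ vectorfield and a $C^2$ function $e$ -- no higher regularity of $F$ enters. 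Once the edge estimate is in hand, assembling the pieces via Corollary~\ref{cor:waz} (faces $|x|=a$ from Lemma~\ref{lem:galin}, faces $b|x|+|p|=b$ from the new estimate, exceptional finite set $Z\subset E^-\cup E^+$) gives that $\mathbb R\times\Gamma_a\cap\Delta_b$ is a bound set for every $v_\lambda$, which is the assertion.
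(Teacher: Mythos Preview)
Your plan via Corollary~\ref{cor:waz} and a curvature bound function for $e_2$ is more machinery than the linear case needs, and your sketch does not actually carry out the key step. The paper's proof is a direct first-derivative computation: on each edge $b|x|+|p|=b$ with $|x|\le a$ the normal derivative $(De_2)v_\lambda$ already has a definite sign. For instance on $0\le x\le a$, $p=b(1-x)$,
\[
w_\lambda(t,x,p)^T\begin{bmatrix} b\\1\end{bmatrix}
=b^2(1-x)\Bigl(1-\tfrac{x}{1+x}-\lambda\tfrac{(1+x)F(t)}{b^2}\Bigr)+Gx\sqrt{1-x^2}>0
\]
as soon as $b^2>\frac{(1+a)\|F\|}{1-a}$, so that entire edge lies in $(\mathbb R\times\Delta_b)^-$; the remaining three edges are handled by symmetry, and one concludes with Proposition~\ref{prop:bounce}. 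No second derivatives, no corner bookkeeping, and in particular no $\dot F$ enter.

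Your route, by contrast, presupposes that $(De_2)v_\lambda$ may vanish and plans to verify $\ddot e_2>0$ on that locus; but $\ddot e_2=(De_2)(Dv_\lambda)v_\lambda$ (since $D^2e_2=0$ on each linear edge) involves $\partial_t$ of \eqref{eq:plin} and hence $\dot F$, so your remark that ``no higher regularity of $F$ enters'' is false for your own approach. There is also a slip in your substitution (from $b\sigma p+\tau\dot p=0$ one gets $\dot p=-b\sigma\tau p$, not $p=-\tau b\sigma\dot p$), and the ``leading term of order $b^2$'' in $\ddot e_2$ is asserted, not computed. In fact the paper's calculation shows that for $b$ large the locus $(De_2)v_\lambda=0$ is empty on the edges, so your curvature condition would be vacuous---the work you anticipate never materializes. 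The second-derivative strategy you outline is precisely what the paper must use in the \emph{planar} problem (Lemma~\ref{lem:2}), where $(Dn_b)v_\lambda$ genuinely can vanish; in the linear problem that complication does not occur.
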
 
\begin{proof}
Actually,
we prove that if 
\begin{equation}
\label{eq:wxp}
b^2> \frac{(1+a)\|F\|}{1-a}.
\end{equation}
then the conclusion holds. Indeed,
by Lemma~\ref{lem:2lin} it is enough to consider 
$(t,x,p)$ such that $b|x|+|p|=b$ and $|x|\leq a$.  
At first we assume $0\leq x\leq a$ and $p\geq 0$, hence $p=b(1-x)$.
We estimate the scalar product of $w_\lambda$ at $(t,x,a)$ with the vector 
$\begin{bmatrix} b & 1\end{bmatrix}^T$
perpendicular 
to the line $bx+p=b$ and directed outward from
$\Delta_b$. As a consequence of \eqref{eq:wxp} we get
\begin{multline*}
w_\lambda(t,x,p)^T\begin{bmatrix} b \\ 1 \end{bmatrix}
=bp+Gx\sqrt{1-x^2}-\frac{xp^2}{1-x^2}-\lambda(1-x^2)F(t)
\\
=b^2(1-x)\left(1-\frac{x}{1+x}-\lambda\frac{(1+x)F(t)}{b^2}\right)+
Gx\sqrt{1-x^2}
\\
>
b^2(1-a)\left(1-a-\frac{(1+a)\|F\|}{b^2}\right)>0,
\end{multline*}
hence 
$(t,x,p)\in (\mathbb R\times \Delta_b)^-$.
\begin{figure}[ht]
\begin{tikzpicture}
\draw[->] (-4,0) -- (4,0) node[below right] {$x$};
\draw (-3.7,-.1) -- (-3.7,.1);
\draw (-3.7,-.3) node {$-1$};
\draw (3.7,-.1) -- (3.7,.1);
\draw (3.7,-.3) node {$1$};
\draw[->] (0,-4) -- (0,4) node[right] {$p$};
\draw (-2.4,-1.5) -- (-2.4,1.5);
\draw (-2.1,-.3) node {$-a$};
\draw (2.4,-1.5) -- (2.4,1.5);
\draw (2.2,-.3) node {$a$};
\draw (-2.4,1.5) -- (0,3.4);
\draw (-2.4,-1.5) -- (0,-3.4);
\draw (2.4,1.5) -- (0,3.4);
\draw (2.4,-1.5) -- (0,-3.4);
\draw (-.2,3.7) node {$b$};
\draw (.3,-3.8) node {$-b$};

\draw[domain=-3:-2.4,smooth]  plot(\x,{sqrt(-\x-2.4) });
\draw[domain=-3:-2.4,smooth,<-]  plot(\x,{-sqrt(-\x-2.4) });

\draw[domain=2.4:3,smooth,->]  plot(\x,{sqrt(\x-2.4) });
\draw[domain=2.4:3,smooth,]  plot(\x,{-sqrt(\x-2.4) });

\draw[domain=-2.8:-2,smooth,->]  plot(\x,{sqrt(-\x-1) });
\draw[domain=-2.8:-2,smooth,<-]  plot(\x,{-sqrt(-\x-1) });

\draw[domain=2:2.8,smooth,->]  plot(\x,{sqrt(\x-1) });
\draw[domain=2:2.8,smooth,<-]  plot(\x,{-sqrt(\x-1) });

\draw[domain=-1.6:-.8,smooth,->]  plot(\x,{-.3*\x+2.1 });\draw[domain=-1.6:-.8,smooth,<-]  plot(\x,{.3*\x-2.1 });

\draw[domain=.8:1.6,smooth,->]  plot(\x,{.3*\x+2.1 });
\draw[domain=.8:1.6,smooth,<-]  plot(\x,{-.3*\x-2.1 });

\draw[domain=-.8:.8,smooth,->]  plot(\x,{.5*\x*\x+3.4 });
\draw[domain=-.8:.8,smooth,<-]  plot(\x,{-.5*\x*\x-3.4 });
\end{tikzpicture}
\caption{ }
\label{fig:1}
\end{figure}
The same inequality \eqref{eq:wxp} implies 
$(t,x,p)\in (\mathbb R\times \Delta_b)^-$ if $-a\leq x\leq 0$ and
$p\leq 0$, and $(t,x,p)\in \mathbb (\mathbb R\times \Delta_b)^+$
if $0\leq x\leq a$ and
$p\leq 0$ or $-a\leq x\leq 0$ and $p\geq 0$, as it is sketched out in
Figure~\ref{fig:1}, hence the result follows by Proposition~\ref{prop:bounce}.
\end{proof}

\begin{thm}
\label{thm:lin}
If $F$ is continuous and $T$-periodic then \eqref{eq:sl} has a $T$-periodic solution.
\end{thm}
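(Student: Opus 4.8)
The plan is to apply Theorem~\ref{thm:fl} with the phase space variable $(x,p)\in\mathbb R^2$, the family of equations \eqref{eq:xlin}--\eqref{eq:plin}, and the compact set $B:=\Gamma_a\cap\overline{\Delta_b}$, where $a$ is chosen according to Lemma~\ref{lem:galin} and $b$ according to Lemma~\ref{lem:2lin}. The vectorfields $w_\lambda$ and $v_\lambda$ are exactly those introduced after \eqref{eq:plin}, and $w_\lambda$ is $C^1$ on the open set $\Omega:=\{(x,p)\colon |x|<1\}$ since $a<1$ keeps us away from the singularities at $|x|=1$. By Lemmas~\ref{lem:galin} and \ref{lem:2lin} the set $\mathbb R\times B = \mathbb R\times(\Gamma_a\cap\overline{\Delta_b})$ is a bound set for $v_\lambda$ for every $\lambda\in[0,1]$ (intersecting two bound sets built from the constraints $|x|\le a$ and $b|x|+|p|\le b$ via Corollary~\ref{cor:waz} — or simply by reusing Proposition~\ref{prop:bounce}, noting the proofs of the two lemmas exhibit $\partial B$ as a union of exit and entrance points).

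Next I would verify the remaining hypotheses of Theorem~\ref{thm:fl}. For $\lambda=0$ the equation \eqref{eq:plin} loses its $F(t)$-term and becomes autonomous: $w_0(x,p)=\bigl(p,\,(G\sqrt{1-x^2}-p^2/(1-x^2))x\bigr)$, so $w_0$ is indeed $t$-independent. It remains to compute the Brouwer degree $\deg(w_0,\operatorname{int}B,0)$. The zeros of $w_0$ in $\operatorname{int}B$: the first component forces $p=0$, and then the second component reduces to $Gx\sqrt{1-x^2}=0$, i.e. $x=0$ (since $|x|<1$ and $G>0$). So the only zero is the origin, which lies in $\operatorname{int}B$ because $B$ is a neighbourhood of $0$. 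Linearizing $w_0$ at $(0,0)$ gives the matrix $\begin{bmatrix} 0 & 1 \\ G & 0\end{bmatrix}$, whose determinant is $-G<0$; hence the index of this isolated zero is $\operatorname{sgn}(-G)=-1$, and therefore $\deg(w_0,\operatorname{int}B,0)=-1\neq 0$.

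With all hypotheses of Theorem~\ref{thm:fl} satisfied, we conclude that the equation \eqref{eq:xlin}--\eqref{eq:plin} for $\lambda=1$ has a $T$-periodic solution $t\mapsto(x(t),p(t))$ with image in $B$; in particular $p=\dot x$, so $t\mapsto x(t)$ is a $T$-periodic solution of \eqref{eq:sl} staying in $|x|\le a<1$, which establishes the theorem. The one genuinely substantive point — and the only place one must be careful — is the bound-set verification packaged in Lemmas~\ref{lem:galin} and \ref{lem:2lin}; once those are in hand the degree computation is a two-line Jacobian calculation and the rest is a direct invocation of the machinery of Section~\ref{sec:some}. (A minor technical remark: $w_\lambda$ is only $C^1$ on $\Omega$, not globally on $\mathbb R^2$, but since $B\subset\Omega$ is compact this is all Theorem~\ref{thm:fl} requires, and the $T$-periodicity of $t\mapsto w_\lambda(t,x,p)$ is inherited from that of $F$.)
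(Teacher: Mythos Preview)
Your argument follows the paper's proof almost verbatim: reduce to Theorem~\ref{thm:fl} with $B=\Gamma_a\cap\Delta_b$, invoke Lemmas~\ref{lem:galin} and~\ref{lem:2lin} for the bound-set condition, observe that $w_0$ is autonomous with a unique zero at the origin, and compute $\deg(w_0,\operatorname{int}B,0)=\operatorname{sign}\det\begin{bmatrix}0&1\\G&0\end{bmatrix}=-1$.

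There is, however, one genuine omission. The hypothesis of the theorem is that $F$ is merely \emph{continuous}, but the entire machinery of Section~\ref{sec:some}---in particular Theorem~\ref{thm:fl}---is set up for $w_\lambda$ of $C^1$-class in $(t,x)$. Your assertion that ``$w_\lambda$\dots is $C^1$ on the open set $\Omega=\{(x,p)\colon |x|<1\}$'' addresses only regularity in $(x,p)$; if $F$ is not $C^1$ then $w_\lambda$ is not $C^1$ in $t$, and Theorem~\ref{thm:fl} as stated does not apply. The paper handles this by first running your argument under the extra assumption that $F$ is $C^1$, obtaining a $T$-periodic solution with image in the compact set $\Gamma_a\cap\Delta_b$, and then passing to continuous $F$ by a standard approximation-and-compactness argument (the bound set is independent of $F$ once $\|F\|$ is controlled). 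You need to add this step, or else appeal to the Carath\'eodory-class version of the continuation theorem in \cite{cmz}.
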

\begin{proof} 
The equation \eqref{eq:sl} is equivalent to the 
system \eqref{eq:xlin},\eqref{eq:plin} for $\lambda=1$. 
Assume first $F$ is of $C^1$-class; we prove that there exists a $T$-periodic solution
with image contained in $\Gamma_a\cap \Delta_b$.
By Theorem~\ref{thm:fl} and Lemma~\ref{lem:2lin}
it remains to prove
that the degree of $w_0$ 
in the interior of $\Gamma_a\cap \Delta_b$ is not equal to zero.  The origin $(0,0)$ is the only zero of $w_0$ and
\[
(Dw_0)(0,0)=\begin{bmatrix}0 & 1\\ G & 0\end{bmatrix},
\]
hence
\[
\operatorname{deg}(w_0,\operatorname{int}
(\Gamma_a\cap \Delta_b),0)=\operatorname{sign} \det (Dw_0)(0,0)=-1
\]
and the proof is complete in the $C^1$-class case. Since $\Gamma_a\cap \Delta_b$ is compact,
a standard approximation argument provides a proof if $F$ is continuous.
\end{proof}  

Actually, in the above proof we do not need to assume first that $F$ is
of $C^1$-class. That assumption was required since Theorem~\ref{thm:fl} was formulated
in the $C^1$-class setting applied for the whole Section~\ref{sec:some}. 
A general formulation of that result
given in \cite{cmz} is valid even for equations satisfying the Carath\'eodory conditions. 
\par
The phase portrait shown in Figure~\ref{fig:1} indicates an alternative
proof of Theorem~\ref{thm:lin} by an application of \cite[Theorem~1]{uiam} 
(see also \cite[Corollary~7.4]{rs-na}).  As it was
mentioned in Section~\ref{sec:intro}, Theorem~\ref{thm:lin} is the reformulation of
Theorem~\ref{thm:main_linear} in the Cartesian coordinates system, hence
the latter theorem is also proved.

\section{Periodic solutions in the planar Whitney's problem}
\label{sec:ps}
Now we consider the planar Whitney's problem. We proceed in an analogous 
way as in Section~\ref{sec:plin}. At first we derive the corresponding differential equations.
The top of the rod at time $t$ has the horizontal coordinates 
$x(t)=(x_1(t),x_2(t))$ with respect to the pivot, its distance from the pivot
is equal to $\ell$, and the position of the pivot
is represented by $f(t)=(f_1(t),f_2(t))$. 
The vertical position is equal to $y(t)$,
hence
\begin{equation}
\label{eq:yplan}
y(t)=\sqrt{\ell^2-|x(t)|^2}.
\end{equation}
The constraint force is perpendicular to the half-sphere 
$y=\sqrt{\ell^2-|x|^2}$. Denote by $m$ the mass of the rod; we assume
it is concentrated at the top. Let 
$g$ be the gravitational constant and let $\mu(t)$ refers to the magnitude of
the constraint force as in \eqref{eq:xx},\eqref{eq:yy}, hence
\begin{align}
\label{eq:mxpl}
& m(\ddot x(t)+\ddot f(t))=\mu(t)x(t),
\\
\label{eq:mypl}
& m\ddot y(t)=-mg+\mu(t)\sqrt{\ell^2-|x(t)|^2}.
\end{align}
The equations \eqref{eq:yplan} and \eqref{eq:mypl} imply
\begin{align*}
&
\mu=m\frac{g+\ddot y}{\sqrt{\ell^2-|x|^2}},
\\
&
\ddot y=
-\frac{x^T\ddot x  + |\dot x|^2}{\sqrt{\ell^2-|x|^2}}
-\frac{(x^T\dot x)^2}{(\ell^2-|x|^2)^{3/2}},
\end{align*}
hence, by \eqref{eq:mxpl}, the system 
\begin{align*}
&(\ell^2-x_2^2)\ddot x_1+x_1x_2\ddot x_2=
\left(
g\sqrt{\ell^2-|x|^2}- \frac{(x^T\dot x)^2}{\ell^2-|x|^2}-
|\dot x|^2\right)x_1- (\ell^2-|x|^2)\ddot f_1,
\\
&x_1x_2\ddot x_1+(\ell^2-x_1^2)\ddot x_2=
\left(
g\sqrt{\ell^2-|x|^2}- \frac{(x^T\dot x)^2}{\ell^2-|x|^2}-
|\dot x|^2\right)x_2- (\ell^2-|x|^2)\ddot f_2
\end{align*}
which resolves to the equation
\begin{equation}
\label{eq:plan}
\ddot x=\frac{1}{\ell^2}
\left(g\sqrt{\ell^2-|x|^2}-\frac{(x^T\dot x)^2}{\ell^2-|x|^2}-|\dot x|^2\right)x 
+\frac{x^T\ddot f(t)}{\ell^2}x-\ddot f(t).
\end{equation}
After changing the variable $x$ to $\frac{1}{\ell}x$, 
the equation \eqref{eq:plan}
becomes
\begin{equation}
\label{eq:main}
\ddot x=
\left(G\sqrt{1-|x|^2}-\frac{(x^T\dot x)^2}{1-|x|^2}-|\dot x|^2\right)x 
+(x^TF(t))x-F(t),
\end{equation}
where $G:=\frac{g}{\ell}$ and $F:=\frac{1}{\ell}\ddot f$.
We associate with \eqref{eq:main}
the autonomous system 
\begin{align}
\label{eq:t}
&\dot t=1,
\\
\label{eq:x}
&\dot x=p,
\\
\label{eq:p}
&\dot p=\left(G\sqrt{1-|x|^2}-\frac{(x^Tp)^2}{1-|x|^2}-|p|^2\right)x 
+\lambda((x^TF(t))x-F(t))
\end{align}
with parameter $\lambda\in [0,1]$.
Set
\begin{align*}
&R:=G\sqrt{1-|x|^2}-\frac{(x^Tp)^2}{1-|x|^2}-|p|^2,
\\
&\Phi:=
\lambda((x^TF(t))x-F(t)).
\end{align*}
Assume $F$ is $T$-periodic and of $C^1$-class.
As in Section~\ref{sec:plin},
we 
denote by $w_\lambda$ be the right-hand side of \eqref{eq:x},\eqref{eq:p} and by $v_\lambda$
the right-hand side of \eqref{eq:t},\eqref{eq:x},\eqref{eq:p};
\[
w_\lambda(t,x,p):=\begin{bmatrix} p \\ Rx+\Phi \end{bmatrix},
\quad
v_\lambda(t,x,p):=\begin{bmatrix} 1\\ p \\ Rx+\Phi \end{bmatrix}.
\]
The derivative of $v_\lambda$ is given by
\[
Dv_\lambda=\begin{bmatrix} 0 & 0 & 0 
\\
0 & 0 & I
\\
\frac{\partial\Phi}{\partial t} & RI+x\frac{\partial R}{\partial x} +
\frac{\partial \Phi}{\partial x}
& x\frac{\partial R}{\partial p}
\end{bmatrix},
\]
hence
\begin{equation}
\label{eq:dvv}
(Dv_\lambda)v_\lambda=\begin{bmatrix} 0 \\ Rx+\Phi \\ 
\frac{\partial\Phi}{\partial t}+Rp+x\frac{\partial R}{\partial x}p +
\frac{\partial \Phi}{\partial x}p 
+Rx\frac{\partial R}{\partial p}x+ x\frac{\partial R}{\partial p}\Phi
\end{bmatrix}
\end{equation}
Similarly as it was done in Section~\ref{sec:plin} in the 
$2$-dimensional setting,
for $0<a<1$ and $b>0$ 
we define subsets of $\mathbb R^4$,
\begin{align*}
&
\Gamma_a:=\{(x,p)\in\mathbb R^2\times 
\mathbb R^2\colon  |x|\leq a\},
\\
& \Delta_b=\{(x,p)\in \mathbb R^2\times 
\mathbb R^2\colon |x|<1,\ b|x|+|p|\leq b\}.
\end{align*}
Let
$m_a(t,x,p):=\frac{1}{2}|x|^2-\frac{1}{2}a^2$, hence
\[
\{m_a\leq 0\}=\mathbb R\times \Gamma_a.
\]
\begin{lem}
\label{lem:ga}
There exists an $a_0\in (0,1)$ such that if $a_0\leq a<1$ then
$m_a$ is a curvature bound function for $v_\lambda$ for all $\lambda \in [0,1]$.  
\end{lem}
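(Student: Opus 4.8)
The plan is to verify condition \eqref{eq:ext} directly for $e=m_a$, using the computation of $(Dv_\lambda)v_\lambda$ already recorded in \eqref{eq:dvv}. Since $m_a$ depends only on $x$ (not on $t$ or $p$) and $m_a(t,x,p)=\tfrac12|x|^2-\tfrac12a^2$, we have $Dm_a = \begin{bmatrix} 0 & x^T & 0\end{bmatrix}$ and $D^2m_a = \operatorname{diag}(0, I, 0)$. First I would compute $(Dm_a)v_\lambda = x^Tp$, so the hypothesis of the implication in \eqref{eq:ext} becomes $x^Tp=0$; geometrically this is the tangency condition, and note that on the set $\{m_a=0\}$ we have $|x|=a$. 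Next, using $v_\lambda^T(D^2m_a)v_\lambda = |p|^2$ together with the last block of \eqref{eq:dvv}, the quantity to be shown positive is
\[
v_\lambda^T(D^2m_a)v_\lambda + (Dm_a)(Dv_\lambda)v_\lambda = |p|^2 + x^T(Rx+\Phi) = |p|^2 + R|x|^2 + x^T\Phi.
\]
So the whole lemma reduces to proving that, when $|x|=a$ and $x^Tp=0$, one has $|p|^2 + a^2 R + x^T\Phi > 0$ for all $\lambda\in[0,1]$, provided $a$ is close enough to $1$.

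The next step is to substitute the definitions of $R$ and $\Phi$ and exploit $x^Tp=0$. Under that constraint $(x^Tp)^2=0$, so $R = G\sqrt{1-a^2} - |p|^2$, whence $a^2R = a^2G\sqrt{1-a^2} - a^2|p|^2$. Also $x^T\Phi = \lambda\big((x^TF(t))(x^Tx) - x^TF(t)\big) = \lambda(x^TF(t))(|x|^2-1) = -\lambda(1-a^2)(x^TF(t))$, and by Cauchy--Schwarz $|x^TF(t)|\le a\|F\|$. Collecting terms, the expression equals
\[
|p|^2(1-a^2) + a^2G\sqrt{1-a^2} - \lambda(1-a^2)(x^TF(t)) \ \ge\ a^2G\sqrt{1-a^2} - (1-a^2)a\|F\|,
\]
where I used $1-a^2>0$ to drop the nonnegative $|p|^2(1-a^2)$ term and bounded the last term below by $-(1-a^2)a\|F\|$. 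The right-hand side is positive precisely when $aG > \|F\|\sqrt{1-a^2}$, exactly the same threshold as in Lemma~\ref{lem:galin}; hence there is $a_0\in(0,1)$ such that the inequality holds for all $a\in[a_0,1)$ and all $\lambda\in[0,1]$. This establishes \eqref{eq:ext}, so $m_a$ is a curvature bound function for $v_\lambda$.

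I do not expect a serious obstacle here: the only slightly delicate point is keeping the block-matrix bookkeeping in \eqref{eq:dvv} straight and remembering to use $|x|=a$ (valid on $\{m_a=0\}$) rather than $|x|\le a$, and to use the tangency condition $x^Tp=0$ to kill the awkward $(x^Tp)^2/(1-|x|^2)$ and $|p|^2$-via-$x\partial R/\partial p$ contributions. Everything else is a one-line Cauchy--Schwarz estimate followed by the same elementary inequality $aG>\|F\|\sqrt{1-a^2}$ that already appeared in the linear case, so the same choice of $a_0$ works.
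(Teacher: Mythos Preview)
Your proposal is correct and follows essentially the same route as the paper: you compute $Dm_a$ and $D^2m_a$, reduce the tangency condition to $x^Tp=0$ on $|x|=a$, obtain the second-order quantity $|p|^2+R|x|^2+x^T\Phi$, and then substitute to arrive at the threshold $aG>\|F\|\sqrt{1-a^2}$. The paper's proof is identical in structure, differing only in the cosmetic factorization $a\sqrt{1-a}\bigl(Ga\sqrt{1+a}-\lambda(1+a)\|F\|\sqrt{1-a}\bigr)$ of the same final estimate.
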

\begin{proof}
Let $m_a(t,x,p)=0$, i.e. $|x|=a$. Assume 
\begin{equation}
\label{eq:dmv}
(Dm_a)v_\lambda=0
\end{equation}
at $(t,x,p)$. Since
\[
D m_a=\begin{bmatrix} 0 & x^T & 0\end{bmatrix},\quad D^2m_a=\operatorname{diag}(0, I, 0),
\]
the equations \eqref{eq:dvv} and \eqref{eq:dmv} imply
$x^Tp=0$ and  
\begin{multline*}
v_\lambda^T(D^2m_a)v_\lambda+(Dm_a)(Dv_\lambda)v_\lambda
\\
=|p|^2+R|x|^2+x^T\Phi\geq
|p|^2+Ga^2\sqrt{1-a^2}-|p|^2a^2-\lambda a(1-a^2)|F(t)|
\\
\geq (1-a^2)|p|^2+
a\sqrt{1-a}\left(Ga\sqrt{1+a}-\lambda(1+a)\|F\|\sqrt{1-a}\right).
\end{multline*}
Clearly, there exists $a_0$ close to $1$ 
such that if $a$ satisfies $a_0\leq a<1$ then 
the right-hand side is positive for all $\lambda\in [0,1]$, hence the result follows. 
\end{proof}
In the sequel we fix an $a$ satisfying the conclusion of Lemma~\ref{lem:ga}.

\begin{lem}
\label{lem:2}
There exists $b>0$ such that the set $\mathbb R
\times\Gamma_a\cap \Delta_b$ 
is a bound set for $v_\lambda$ for all $\lambda\in [0,1]$.
\end{lem}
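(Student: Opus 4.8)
The plan is to follow the same strategy as Lemma~\ref{lem:2lin} in the linear case, but now in the $4$-dimensional phase space $\mathbb R^2\times\mathbb R^2$, and to combine Corollary~\ref{cor:waz} with Lemma~\ref{lem:ga}. The set $E:=\mathbb R\times\Gamma_a\cap\Delta_b$ is cut out by $m_a\leq 0$ and by $e_b:=b|x|+|p|-b\leq 0$ (together with $|x|<1$, which is harmless since $a<1$). By Lemma~\ref{lem:ga}, $m_a$ is a curvature bound function for $v_\lambda$, so the part of $\partial E$ lying on $\{m_a=0\}$ is taken care of by Corollary~\ref{cor:waz}(a); it remains to handle the part of $\partial E$ on $\{e_b=0\}$, i.e.\ points with $b|x|+|p|=b$ and $|x|\leq a$. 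For these I would show directly that they lie in $E^-\cup E^+$ by estimating the scalar product of $w_\lambda$ with an outward normal to $\{e_b=0\}$, choosing $b$ large enough; this is the $Z_k$ set of Corollary~\ref{cor:waz}(b). Alternatively, since $e_b$ is not smooth at $p=0$ or $x=0$, one may split $\partial E\cap\{e_b=0\}$ into the region $\{x\neq 0,\ p\neq 0\}$, where $e_b$ is $C^2$ and one can try to verify the curvature bound condition, and the lower-dimensional remainder where the exit/entrance argument is used; but the cleanest route is to put the whole face $\{e_b=0\}\cap\partial E$ into the exit–entrance set.

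Concretely, on $\{e_b=0\}$ write $|p|=b(1-|x|)$. For $x\neq 0$ an outward conormal to $\Delta_b$ is proportional to $n:=\bigl(b\tfrac{x}{|x|},\ \tfrac{p}{|p|}\bigr)^T$, and
\[
n^Tw_\lambda=b\,\frac{x^Tp}{|x|}+\frac{p^T(Rx+\Phi)}{|p|}
=b\,\frac{x^Tp}{|x|}+\frac{(x^Tp)R}{|p|}+\frac{p^T\Phi}{|p|}.
\]
The first two terms combine, using $|p|=b(1-|x|)$, into a multiple of $x^Tp$ plus the genuinely definite contribution coming from $-|p|^2$ inside $R$; the dangerous term is $p^T\Phi$, which is bounded by $\lambda\,|p|\,(1+|x|)\|F\|\leq |p|(1+a)\|F\|$. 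After collecting terms one obtains a lower bound of the shape $b^2(1-|x|)^2\bigl(c_1-c_2/b^2\bigr)$ plus lower-order, with $c_1>0$ depending only on $a$ and $G$; hence choosing $b$ with $b^2>$ (an explicit constant involving $a$, $G$, $\|F\|$) forces $n^Tw_\lambda>0$, so the outward component is positive and the point is in $(\mathbb R\times\Delta_b)^-$, uniformly in $\lambda\in[0,1]$. By the sign symmetry $p\mapsto -p$ reversing time, the complementary half of the face lies in $(\mathbb R\times\Delta_b)^+$, exactly as in Figure~\ref{fig:1}. One then invokes Proposition~\ref{prop:bounce} for the $\{e_b=0\}$ face and Corollary~\ref{cor:waz} to conclude that $E$ is a bound set for each $v_\lambda$.

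I would organize the write-up as: (1) fix $a$ from Lemma~\ref{lem:ga}; (2) state the choice of $b$ (an explicit inequality such as $b^2>(1+a)\|F\|/(1-a)$, possibly with an extra $G$-dependent term to absorb the $(x^Tp)^2/(1-|x|^2)$ piece of $R$); (3) on the face $\{b|x|+|p|=b,\ |x|\leq a\}$ with $x,p\neq 0$ compute $n^Tw_\lambda$ and bound it below, distinguishing the sign of $x^Tp$ — note that unlike the linear case $x$ and $p$ are vectors, so one can no longer say $p\geq 0$, and the term $bx^Tp/|x|+(x^Tp)R/|p|$ need not have a fixed sign; (4) handle the degenerate sets $\{x=0\}$ (where $|p|=b$, and $\dot x=p\neq0$ points transversally) and, if the curvature-bound route is not used there, $\{p=0\}$ (where $|x|=1$, which is excluded, so this set is empty on $E$); (5) combine with Lemma~\ref{lem:ga} via Corollary~\ref{cor:waz}.

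The main obstacle, and the place where this proof genuinely departs from the linear one, is step~(3): in dimension four $x^Tp$ is an uncontrolled inner product rather than a single coordinate, so the ``good'' term $bx^Tp/|x|$ from the $b|x|$ part of the conormal and the term $(x^Tp)R/|p|$ from the $|p|$ part must be shown to cancel or to combine favourably regardless of the angle between $x$ and $p$. The identity $|p|=b(1-|x|)$ is what makes the combination $b\,\frac{x^Tp}{|x|}+\frac{(x^Tp)R}{|p|}$ collapse — substituting $R$ and using this relation, the $x^Tp$-linear terms should assemble into $b^2(1-|x|)^2$ times something positive minus a curvature/friction defect, with the truly indefinite pieces (those not proportional to $(1-|x|)^2$) being of lower order in $b$ and absorbed by enlarging $b$. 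Getting the bookkeeping of these indefinite cross terms right, and confirming the bound is uniform in $\lambda$, is the crux; the rest is routine and parallels Lemma~\ref{lem:2lin}.
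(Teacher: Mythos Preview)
Your plan to put the entire face $\{e_b=0\}\cap\partial E$ into the exit--entrance set via a first-order estimate on $n^Tw_\lambda$ does not work in the planar problem, and this is precisely where the paper's proof departs from the linear one. On that face, with $0<|x|\leq a$ and $|p|=b(1-|x|)$,
\[
n^Tw_\lambda
=\Bigl(\tfrac{b}{|x|}+\tfrac{R}{|p|}\Bigr)\,x^Tp+\tfrac{p^T\Phi}{|p|},
\]
so every term except the last is proportional to $x^Tp$. In $\mathbb R^2$ nothing prevents $x\perp p$; at such a point $n^Tw_\lambda=-\lambda\,p^TF(t)/|p|$, which can take any value in $[-\|F\|,\|F\|]$ and in particular can vanish or have either sign, for every choice of $b$. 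There is also no ``sign symmetry $p\mapsto -p$'' splitting available: the face $b|x|+|p|=b$ is connected and is not a union of pieces on which $x^Tp$ keeps a fixed sign, so the four-case analysis of Lemma~\ref{lem:2lin} has no analogue. Your expectation that ``the $x^Tp$-linear terms should assemble into $b^2(1-|x|)^2$ times something positive'' cannot hold, because those terms all vanish when $x^Tp=0$.

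The paper follows the alternative you mention and then set aside: it treats $n_b$ as a \emph{curvature bound function} on the smooth part of the face, taking only the lower-dimensional set $Z=\{x=0,\ |p|=b\}$ into the exit set by a direct cone argument. The key observation is that the \emph{vanishing} of the first derivative, $(Dn_b)v_\lambda=0$, forces
\[
|x^Tp|\leq \frac{4\|F\|\,|x|}{b},
\]
so on the zero set of the first derivative the dangerous cross term is $O(1/b)$. One then computes $v_\lambda^T(D^2n_b)v_\lambda+(Dn_b)(Dv_\lambda)v_\lambda$ and shows, using this bound on $x^Tp$, that the leading contribution is of order $b^3$ and positive while all remaining terms are $O(b)$ or lower; for $b$ large the whole expression is positive and Corollary~\ref{cor:waz} applies. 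This second-derivative computation produces a $\partial\Phi/\partial t$ term, hence a $\|\dot F\|$, which is why the planar theorem requires $F\in C^1$ (equivalently $f\in C^3$) rather than merely continuous $F$ as in the linear case---another sign that a purely first-order argument cannot suffice here.
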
 
\begin{proof} 
Let $n_b(t,x,p):=b|x|+|p|-b$, hence
\[
\{n_b\leq  0\}=\mathbb R\times \Delta_b.
\] 
It follows the set $\mathbb R
\times\Gamma_a\cap \Delta_b$ is of the form required in Corollary~\ref{cor:waz}
for $e_1=m_a$ and $e_2=n_b$. By Lemma~\ref{lem:ga} and the choice of $a$, in order
to apply the theorem it is enough to examine the part of the boundary corresponding
to $n_b=0$. 
Set $Z:=\{(t,0,p)\colon t\in\mathbb R,\ |p|=b\}$. Clearly, $n_b$ is of $C^2$-class
in a neighborhood of each point 
$(t,x,p)\in (\mathbb R\times \partial \Delta_b)\setminus Z$.
At first we
find an estimate on $b$ which guarantee 
that $Z$ is contained
in the exit set of $\mathbb R\times\Delta_b$. Let $(t,0,p)\in Z$. It suffices
to find $b$ such that $v_\lambda(t,0,p)\notin C$, where $C$ denotes the cone
at $(t,0,p)$ of vectors directed to $\mathbb R\times \Delta_b$,
which means
\[
\begin{bmatrix}
1 \\ p \\ -\lambda F(t)\end{bmatrix}
\neq \mu \begin{bmatrix} u \\ x \\ -p\end{bmatrix}
\]
for each $\mu>0$, $u\in\mathbb R$, and $|x|\leq 1$. This is
satisfied if 
\begin{equation}
\label{eq:estz}
b^2>\|F\|.
\end{equation}
It remains to find values of $b$ for which the implication \eqref{eq:ext}
with $e=n_b$ and $v=v_\lambda$ is satisfied at each point  
$
(t,x,p)\in (\mathbb R\times \Gamma_a\cap \partial \Delta_a)\setminus Z,
$
i.e. at each $(t,x,p)$ such that
$t\in\mathbb R$, $0<|x|\leq a$, and
$|p|=b(1-|x|)$. Direct calculations show
\[
Dn_b=\begin{bmatrix} 0 & \frac{b}{|x|}x^T & \frac{1}{|p|}p^T\end{bmatrix},
\quad
D^2n_b=\operatorname{diag}(0,A,B),
\]
where
\[
A:=\frac{b}{|x|^3}\begin{bmatrix}x_2^2 & -x_1x_2 \\ -x_1x_2 & x_1^2\end{bmatrix},\quad
B:=\frac{1}{|p|^3}\begin{bmatrix}p_2^2 & -p_1p_2 \\ -p_1p_2 & p_1^2\end{bmatrix},
\]
hence
\begin{multline}
\label{eq:dnv}
(Dn_b)v_\lambda
\\
=\left(\frac{b}{|x|}+\frac{G}{|p|}\sqrt{1-|x|^2}-\frac{(x^Tp)^2}{|p|(1-|x|^2)}-|p|\right)x^Tp
+\lambda\frac{x^TF(t)x^Tp-F(t)^Tp}{|p|}
\end{multline}
and, by \eqref{eq:dvv},
\begin{multline}
\label{eq:vdv}
v_\lambda^T(D^2n_b)v_\lambda+(Dn_b)(Dv_\lambda)v_\lambda
\\
=
\frac{b}{|x|^3}\left(\det\begin{bmatrix} x & p \end{bmatrix}\right)^2
+ \frac{1}{|p|^3}\left(\det\begin{bmatrix} p & Rx+\Phi \end{bmatrix}\right)^2
\\
+bR|x|+R|p|+
\frac{1}{|p|}\left(\frac{\partial R}{\partial x}p+R\frac{\partial R}{\partial p}x\right)x^Tp
\\
+\frac{b}{|x|}x^T\Phi + \frac{1}{|p|}p^T
\left(\frac{\partial\Phi}{\partial t}+\frac{\partial \Phi}{\partial x}p\right)
+\frac{x^Tp}{|p|}\frac{\partial R}{\partial p}\Phi.
\end{multline}
We are looking for  $b$ 
such that  the value of the right-hand side of \eqref{eq:vdv} is positive provided $(Dn_b)v_\lambda=0$. 
Since
\begin{multline*}
\frac{b}{|x|}-\frac{(x^Tp)^2}{|p|(1-|x|^2)}-|p|
\geq
\frac{b}{|x|}-\frac{|x|^2|p|^2}{|p|(1-|x|^2)}-|p|
\\
=\frac{b}{|x|}-\frac{|p|}{1-|x|^2}=
\frac{b}{|x|}-\frac{b}{1+|x|}=\frac{b}{|x|(1+|x|)}>0,
\end{multline*}
by \eqref{eq:dnv} the equation $(Dn_b)v_\lambda=0$ implies
\begin{equation}
\label{eq:xtp}
|x^Tp|\leq \frac{|x|(1+|x|)(1+|x|^2)\|F\|}{b}\leq \frac{4\|F\||x|}{b}.
\end{equation}
Assume $b$ satisfies
\begin{equation}
\label{eq:bff}
b^4> \frac{16\|F\|^2}{(1-a)^3}.
\end{equation}
It follows, in particular, that 
\eqref{eq:estz} also holds.
As a consequence of \eqref{eq:xtp} and \eqref{eq:bff} we get
\begin{multline}
\label{eq:d+brx+rp}
\frac{b}{|x|^3}\left(\det\begin{bmatrix} x & p \end{bmatrix}\right)^2
+bR|x|+R|p|=
b\frac{|p|^2}{|x|}-\frac{b}{|x|^3}(x^Tp)^2+bR
\\
> 
\frac{b}{|x|}\left((1-|x|)|p|^2 -\frac{16\|F\|^2}{b^2}\right)-\frac{16\|F\|^2|x|^2}{(1-|x|^2)b}
\\
\geq 
\frac{b}{|x|}\left((1-a^3)b^2 -\frac{16\|F\|^2}{b^2}\right)-\frac{16\|F\|^2a^2}{(1-a^2)b}
\geq
Kb^3-L\frac{1}{b},
\end{multline} 
where
\[
K:=\frac{(1-a)^3}{a},\quad L:=16\|F\|^2\left( \frac{1}{a}+\frac{a^2}{1-a^2}\right).
\]
Now we examine the remaining terms of the right-hand side of \eqref{eq:vdv}. 
The term $\frac{1}{|p|^3}\left(\det\begin{bmatrix} p & Rx+\Phi \end{bmatrix}\right)^2$ is always
non-negative.
Since
\begin{align*}
&\frac{\partial R}{\partial x}=-\frac{G}{\sqrt{1-|x|^2}}x^T-\frac{2x^Tp}{1-|x|^2}p^T-
\frac{2(x^Tp)^2}{(1-|x|^2)^2}x^T,
\\
&\frac{\partial R}{\partial p}=-\frac{2x^Tp}{1-|x|^2}x^T-2p^T,
\end{align*}
as an application of \eqref{eq:xtp} we get
\begin{multline}
\label{eq:prpx}
\left|\frac{1}{|p|}\left(\frac{\partial R}{\partial x}p+R\frac{\partial R}{\partial p}x\right)x^Tp\right|
\\
\leq \frac{G}{|p|\sqrt{1-|x|^2}}(x^Tp)^2
+ \frac{2|p|}{1-|x|^2}(x^Tp)^2
+ \frac{2}{|p|(1-|x|^2)^2}|x^Tp|^3
\\
+ \frac{2G|x|^2}{|p|\sqrt{1-|x|^2}}(x^Tp)^2
+ \frac{2|x|^2}{|p|(1-|x|^2)^2}(x^Tp)^4
+ \frac{2|p||x|^2}{1-|x|^2}(x^Tp)^2
\\
+\frac{2G\sqrt{1-|x|^2}}{|p|}(x^Tp)^2
+\frac{2}{|p|(1-|x|^2)}(x^Tp)^4
+2|p|(x^Tp)^2
\\
\leq M\left(\frac{1}{b}+\frac{1}{b^3}+\frac{1}{b^4}+\frac{1}{b^5}\right)
\end{multline}
for some non-negative constant $M$ depending on $a$, $\|F\|$, and $G$. Furthermore,
\begin{align}
\label{eq:bf}
&\left|\frac{b}{|x|}x^T\Phi\right|\leq \|F\|b,
\\
\label{eq:f'}
& \left|\frac{1}{|p|}p^T\frac{\partial\Phi}{\partial t}\right|\leq 2\|\dot F\|.
\end{align}
Since
\[
\frac{\partial \Phi}{\partial x}=\lambda((x^TF(t))I+xF^T),
\]
one gets
\begin{equation}
\label{eq:pfxp}
\left|\frac{1}{|p|}p^T\frac{\partial \Phi}{\partial x}p\right|
\leq 
2\|F\||x||p| < 2\|F\|b.
\end{equation}
Finally, \eqref{eq:xtp} implies
\begin{multline}
\label{eq:xtprf}
\left| \frac{x^Tp}{|p|}\frac{\partial R}{\partial p}\Phi\right|
\\
\leq 2\frac{|x^Tp|}{|p|}
\left(\frac{|x^Tp||x^TF(t)||x|^2}{1-|x|^2}
+ \frac{|x^Tp||x^TF(t)|}{1-|x|^2}+|x^Tp||x^TF(t)|+|p^TF(t)|\right)
\\
\leq N\left(\frac{1}{b}+\frac{1}{b^2}\right)
\end{multline}
for some non-negative constant $N$ depending on $a$ and $\|F\|$. 
Since the constant $K$ is positive,
the equation \eqref{eq:vdv} together with the estimates \eqref{eq:d+brx+rp}
-- \eqref{eq:xtprf} imply
\begin{multline*}
v_\lambda^T(D^2n_b)v_\lambda+(Dn_b)(Dv_\lambda)v_\lambda
>
Kb^3
-3\|F\|b - 2\|\dot F\|-\left(L+M+N\right)\frac{1}{b}
\\
-N\frac{1}{b^2}-
M\left(\frac{1}{b^3}+\frac{1}{b^4}+\frac{1}{b^5}\right)
>0
\end{multline*}
provided $b$ satisfying \eqref{eq:bff} (hence also satisfying \eqref{eq:estz})
is large enough. Now the conclusion is an immediate consequence of 
Corollary~\ref{cor:waz}. 
\end{proof}

\begin{thm}
\label{thm:main}
If $F$ is $T$-periodic and of $C^1$-class then \eqref{eq:main} has a $T$-periodic solution.
\end{thm}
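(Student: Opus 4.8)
The plan is to proceed exactly as in the proof of Theorem~\ref{thm:lin}, now applying Theorem~\ref{thm:fl} in the four-dimensional phase space, that is, with $\Omega$ equal to the open set $\{(x,p)\in\mathbb R^2\times\mathbb R^2\colon |x|<1\}$ on which $w_\lambda$ is of $C^1$-class. The equation \eqref{eq:main} is equivalent to the system \eqref{eq:x},\eqref{eq:p} for $\lambda=1$. Fix $a$ and $b$ as in Lemmas~\ref{lem:ga} and \ref{lem:2}; since $a<1$ the set $B:=\Gamma_a\cap\Delta_b$ is a compact subset of $\Omega$, and by Lemma~\ref{lem:2} the set $\mathbb R\times B$ is a bound set for $v_\lambda$ for all $\lambda\in[0,1]$. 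Moreover $w_0$ is $t$-independent, because putting $\lambda=0$ makes the term $\Phi$ vanish, so that $w_0(x,p)=\begin{bmatrix} p \\ Rx\end{bmatrix}$. Hence the only remaining hypothesis of Theorem~\ref{thm:fl} to be verified is $\operatorname{deg}(w_0,\operatorname{int}B,0)\neq 0$; once this is done, Theorem~\ref{thm:fl} produces a $T$-periodic solution of \eqref{eq:x},\eqref{eq:p} with $\lambda=1$, whose $x$-component solves \eqref{eq:main}.

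To compute this degree I would first locate the zeros of $w_0$ in $\operatorname{int}B$: the first block of $w_0$ forces $p=0$, and then $R=G\sqrt{1-|x|^2}>0$ for $|x|<1$, so $Rx=0$ forces $x=0$; thus $(0,0)$ is the only zero. It remains to check that it is nondegenerate. By the formulas for $\partial R/\partial x$ and $\partial R/\partial p$ displayed in the proof of Lemma~\ref{lem:2}, both of these derivatives vanish at $(0,0)$, while $R(0,0)=G$, so
\[
Dw_0(0,0)=\begin{bmatrix} 0 & I \\ GI & 0\end{bmatrix},
\]
a nonsingular $4\times4$ matrix with $\det Dw_0(0,0)=G^2>0$. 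Consequently
\[
\operatorname{deg}(w_0,\operatorname{int}B,0)=\operatorname{sign}\det Dw_0(0,0)=1\neq 0
\]
(in contrast to the value $-1$ obtained in the two-dimensional case of Theorem~\ref{thm:lin}), and the theorem follows from Theorem~\ref{thm:fl}.

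I do not expect any genuine obstacle, since all of the analytic work has already been carried out in Lemmas~\ref{lem:ga} and \ref{lem:2} and the degree computation reduces to the elementary identity $\det\bigl[\begin{smallmatrix}0&I\\GI&0\end{smallmatrix}\bigr]=G^2$ for $2\times2$ blocks. The few points that deserve a moment of care are: that $B\subset\Omega$, which is ensured by $a<1$; that $w_\lambda$ is genuinely of $C^1$-class on $\mathbb R\times\Omega$, which is exactly why the $C^1$-regularity of $F$ is assumed, so that Theorem~\ref{thm:fl} applies directly without the approximation step used in Theorem~\ref{thm:lin}; and that the $t$-independence of $w_0$ is precisely the statement that $\Phi=0$ when $\lambda=0$. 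Unlike in Theorem~\ref{thm:lin}, no density argument is needed here, because $F$ is already assumed smooth enough.
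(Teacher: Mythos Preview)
Your proposal is correct and follows essentially the same route as the paper: apply Theorem~\ref{thm:fl} with $B=\Gamma_a\cap\Delta_b$, invoke Lemma~\ref{lem:2} for the bound-set hypothesis, and compute $\operatorname{deg}(w_0,\operatorname{int}B,0)=\operatorname{sign}\det Dw_0(0,0)=1$ from the block matrix $\bigl[\begin{smallmatrix}0&I\\GI&0\end{smallmatrix}\bigr]$. Your additional remarks on why the $C^1$ hypothesis on $F$ obviates the approximation step of Theorem~\ref{thm:lin} are accurate and make explicit a point the paper leaves implicit.
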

\begin{proof} 
We apply the same argument as in the proof of Theorem~\ref{thm:lin}.
The equation \eqref{eq:main} is equivalent to the 
system \eqref{eq:x},\eqref{eq:p} for $\lambda=1$. Since
\[
(Dw_0)(0,0)=\begin{bmatrix}0 & 0 & 1 & 0\\ 0 & 0 & 0 &1 \\ G & 0 & 0 & 0 \\ 0 & G & 0 & 0\end{bmatrix}
\]
and $(0,0)$ is the only zero of $w_0$, 
\[
\operatorname{deg}(w_0,\operatorname{int}
(\Gamma_a\cap \Delta_b),0)=\operatorname{sign} \det (Dw_0)(0,0)=1,
\]
hence the result is a consequence of
Theorem~\ref{thm:fl} and Lemma~\ref{lem:2}.
\end{proof}  
By proving Theorem~\ref{thm:main} we simultaneously provided a proof of Theorem~\ref{thm:main_planar},
the main result of the present paper.

\thebibliography{CMZ}
\bibitem[A1]{a1} V. I. Arnol'd. What is Mathematics? MCNMO, Moscow 2002 (in Russian).
\bibitem[A2]{a2} V. I. Arnol'd. Mathematical Understanding of Nature. MCNMO, Moscow 2009 (in Russian).
Translation: 
V. I. Arnold. Mathematical Understanding of Nature. Amer. Math. Soc., Providence,
R.I. 2014. 
\bibitem[Bl]{blank} L. Blank. Review of ``What is Mathematics? An Elementary Approach to Ideas
and Methods''. Notices Amer. Math. Soc., December 2001, 1325 -- 1329.
\bibitem[BK]{bk} S. V. Bolotin, V. V. Kozlov. Calculus of variations in the large, existence
of trajectories in a domain with boundary, and Whitney's inverted pendulum problem.
Izv. Math. 79 (2015), 894 -- 901.
\bibitem[Br]{br} A. Broman. A mechanical problem by H. Whitney. Nordisk Matematisk Tidskrift
6 (1958), 78 -- 82.
\bibitem[CMZ]{cmz} A. Capietto, J. Mawhin, F. Zanolin.
Continuation theorems for periodic perturbations of autonomous systems. Trans. Amer. Math. Soc.
329 (1992), 41 -- 72.
\bibitem[CR]{cr} R. Courant, H. Robbins. What is Mathematics? Oxford University Press, New York, Oxford 1941.
\bibitem[CR2]{crs} R. Courant, H. Robbins. What is Mathematics? 2nd edition
revised by I. Stewart. Oxford University
Press, New York, Oxford 1996.
\bibitem[CT1]{ct1} L. Consolini, M. Tosques. On the existence of small periodic solutions for
2-dimensional inverted pendulum on a cart. SIAM J. Appl. Math. 68 (2007), 486 -- 502.
\bibitem[CT2]{ct2} L. Consolini, M. Tosques. On the exact tracking of the spherical inverted pendulum
via a homotopy method. Systems Control Lett. 58 (2009), 1 -- 6.
\bibitem[CT3]{ct3} L. Consolini, M. Tosques. A continuation theorem on periodic solutions
of regular nonlinear systems and its application to the exact tracking problem for the inverted
spherical pendulum. Nonlinear Anal. 74 (2011), 9 -- 26.
\bibitem[De]{deim} 
K. Deimling.
Nonlinear Functional Analysis. Springer-Verlag, Berlin 1985.
\bibitem[Do]{dold}
A. Dold. 
Lectures on Algebraic Topology. 2nd ed. Springer-Verlag, Berlin, Heidelberg, New York 1980.
\bibitem[GM]{gm} R. E. Gaines, J. L. Mawhin, 
Coincidence Degree, and Nonlinear Differential Equations. 
Lecture Notes in Mathematics 568. Springer-Verlag, Berlin, Heidelberg, New York 1977.
\bibitem[Gi]{gil} L. Gillman. Review of ``What is Mathematics?'' by Richard Courant and Herbert Robbins,
revised by Ian Stewart. Amer. Math. Monthly 105 (1998), 485 -- 488.
\bibitem[Li]{lit} J. E. Littlewood. A Mathematician's Miscellany. Methuen \& Co., London 1953.
\bibitem[LN]{lms} The London Mathematical Society Newsletter 384, September 2009.
\bibitem[Ne]{n4} J. R. Newman (ed.). The World of Mathematics. Vol. IV., Allen \& Unwin, London 1960.
\bibitem[P1]{p1} I. Yu. Polekhin.
Examples of topological approach to the problem of inverted pendulum with moving pivot point. 
Nelin. Dinam. 10 (2014),  465 -- 472 (in Russian). Translation: I. Polekhin. 
An inverted pendulum with a moving pivot point: Examples of topological
approach.
arXiv:1407.4787.
\bibitem[P2]{p2} I. Polekhin. Periodic and falling-free motion of an inverted  spherical pendulum with a moving pivot point.
arXiv:1411.1585.
\bibitem[Po]{post} T. Poston. Au courant with differential equations. Manifold 18 (1976), 6 -- 9.
\bibitem[S1]{fm} R. Srzednicki.
On rest points of dynamical systems.
Fundam. Math. 126 (1985), 69 -- 81.
\bibitem[S2]{uiam} R. Srzednicki. Periodic and constant solutions via topological
principle of Wa\.zewski. Univ. Iagel. Acta Math. 26 (1987), 183 -- 190.
\bibitem[S3]{rs-na} R. Srzednicki.
Periodic and bounded solutions in blocks for time-periodic nonautonomous 
ordinary differential equations. 
Nonlinear Anal. 22 (1994), 707 -- 737.
\bibitem[S4]{handbook} R. Srzednicki.
Wa\.zewski method and Conley index. In:
A. Ca\~nada (ed.) et al. Handbook of Ordinary Differential Equations. 
Vol. I. Elsevier/North Holland, Amsterdam 2004, 591 -- 684.
\bibitem[St]{st} I. Stewart. Gem, Set and Math. Blackwell Ltd., London 1989.
\bibitem[Za]{zan} F. Zanolin. Bound sets, periodic solutions and 
flow-invariance for ordinary differential equations in $\mathbb R^n$:
 some remarks. 
Rend. Ist. Mat. Univ. Trieste 19 (1987), 76 -- 92.
\bibitem[Zu]{z} O. Zubelevich. Bounded solutions to the system
of second order ODEs and the Whitney pendulum.
Appl. Math. (Warsaw) 42 (2015), 159 -- 165.

\end{document}